\theoremstyle{plain}
\newtheorem{thm}{Theorem}[section]
\newtheorem{prop}[thm]{Proposition}
\newtheorem{cor}[thm]{Corollary}
\theoremstyle{definition}
\theoremstyle{remark}
\newtheorem{rmk}[thm]{Remark}
\newcommand{\End}{{\operatorname{End}}}
\newcommand{\Hom}{{\operatorname{Hom}}}
\newcommand{\SL}{{\operatorname{SL }}}
\newcommand{\Gal}{{\operatorname{Gal}}}
\newcommand{\Lift}{{\operatorname{Lift}}}
\DeclareMathOperator{\SK}{SK}
\newcommand{\tr}{{\mathrm{tr}}}
\newcommand{\smallmat}[4]{\bigl(\begin{smallmatrix}#1&#2\\#3&#4\end{smallmatrix}\bigr)}
\newcommand{\gera}{{\mathfrak{a}}}
\newcommand{\gerb}{{\mathfrak{b}}}
\newcommand{\gerd}{{\mathfrak{d}}}
\newcommand{\gerf}{{\mathfrak{f}}}
\def\C{\mathbb{C}}
\def\Q{\mathbb{Q}}
\def\Z{\mathbb{Z}}
\def\mat#1#2#3#4{\left( \begin{array}{cc} #1 & #2 \\ #3 & #4 \end{array} \right) }
\def\2vector#1#2{\left( \begin{smallmatrix} #1 \\ #2 \end{smallmatrix}
\right)}
\def\deb{ \begin{equation} }
\def\fin{ \end{equation} }
\definecolor{Indigo}{rgb}{0.2,0.1,0.7}
\definecolor{Violet}{rgb}{0.5,0.1,0.7}
\definecolor{White}{rgb}{1,1,1}
\definecolor{Green}{rgb}{0.1,0.9,0.2}
\begin{document}

\title[Saito-Kurokawa and Darmon points]{The Saito-Kurokawa lifting and Darmon points}
\date{}
\author{Matteo Longo, Marc-Hubert Nicole}

\begin{abstract} Let $E_{/_\Q}$ be an elliptic curve of conductor $Np$ with $p\nmid N$ 
and let $f$ be its associated newform of weight $2$. Denote by $f_\infty$ the 
$p$-adic Hida family passing though $f$, and by $F_\infty$ its 
$\Lambda$-adic Saito-Kurokawa lift. The $p$-adic family $F_\infty$ of Siegel modular forms 
admits a formal Fourier expansion, from which we can define
a family of normalized Fourier coefficients $\{\widetilde A_T(k)\}_T$ indexed by 
positive definite symmetric half-integral matrices $T$ of size $2\times 2$. 
We relate explicitly certain global points on $E$ (coming from the theory of Darmon points) with the values  
of these Fourier coefficients and of their $p$-adic derivatives, evaluated at weight $k=2$. 
\end{abstract}

\address{M.L. Dipartimento di Matematica Pura e Applicata, Universit\`a di Padova, Via Trieste 63, 35121 Padova, Italy}
\email{mlongo@math.unipd.it}

\address{M.H.N. Institut de mathŽmatiques de Luminy, UniversitŽ d'Aix-Marseille, campus de Luminy, case 907, 13288 Marseille cedex 9, France}
\email{nicole@iml.univ-mrs.fr}

\subjclass[2000]{Primary 11F30; Secondary 11F32, 11F46, 11F85.}
\keywords{Siegel modular forms, Saito-Kurokawa lifting, Darmon points}
\maketitle

\section{Introduction}\label{intro}
Let $f$ be an elliptic newform of weight $2$ and level $\Gamma_0(M)$. 
Eichler and Shimura showed how to associate to $f$ an abelian variety $A_f$ of arithmetic conductor $M$ such that the complex $L$-functions 
attached to $f$ and $A_f$ agree, cf. \cite[Theorem 7.14]{Sh1}.
Although the theory of Siegel modular forms provides a satisfactory generalization of the notion of classical elliptic modular forms in higher dimension, 
no such construction is known for Siegel modular forms of genus $>1$. Generalizing the Shimura-Taniyama conjecture, Yoshida \cite{Yo} conjectured the existence of a genus two holomorphic Siegel modular cusp eigenform of parallel weight $2$ associated to any irreducible abelian surface $A$ defined over $\Q$. Note that this Siegel modular form, in contrast with the elliptic case, cannot be obtained in general by cutting out a piece of the Žtale cohomology of the Siegel modular variety. This makes the connection to geometric constructions much less immediate. 

In this paper, we study instead the reducible case by considering Siegel cusp forms in the 
image of the Saito-Kurokawa lifting.
We use this lift to convert known 
connections between elliptic modular forms 
and certain global points on rational elliptic curves to Siegel modular forms
in the image of the Saito-Kurokawa lifting. 
Our main result (partially summarized in Theorem \ref{main-intro} below) 
provides a relation 
between Fourier coefficients of the Saito-Kurokawa lift of $f$ and global
points on the elliptic curve associated with $f$ as above. This result offers
a geometric interpretation of Fourier coefficients of Saito-Kurokawa lifts.
It is obtained by combining $p$-adic techniques 
developed by Darmon-Tornar\'\i a in \cite{DT} for the $\Lambda$-adic Shintani lifting 
with various results on the Saito-Kurokawa lifting recently obtained  
by Ibukiyama in \cite{Ibu} and an explicit description of Fourier coefficients of modular forms of half-integral 
weight by Kohnen \cite{Ko2}. The global points providing such a description come from the theory of Stark-Heegner points 
introduced by Darmon in \cite{Dar}. Recently, after \cite{LRV}, there has been a move in the literature to relabel Stark-Heegner points 
as Darmon points, explaining the title of the paper. 

To describe our work and main results in a more precise form, 
we fix an elliptic newform  $f$ of weight 2, level $\Gamma_0(Np)$ and rational Fourier coefficients, where $N>1$ is an integer and $p\nmid N$ is a fixed prime number. 

Denote by 
\begin{equation}\label{Hida-family}
f_\infty(k)=\sum_{n\geq 1}a_n(k)q^n\end{equation} the Hida family passing through $f$, 
where $k$ is a $p$-adic variable in a neighborhood $U$ of $2$ in 
\[
\mathcal X:=\Hom_{\rm cont}(\Z_p^\times,\Z_p^\times)\]
and $a_1(k)=1$ for all $k\in U$.
Here $\Z\hookrightarrow \mathcal X$ via $k\mapsto(x\mapsto x^k)$. For simplicity, we will also 
assume that $U$ is contained in the residue class of 2 modulo $p-1$. Thus $f_\infty(2)$ coincides 
with the Fourier expansion of $f$ and, more generally,  $f_\infty(k)$ 
is a normalized ordinary eigenform of level $\Gamma_0(Np)$ for all positive integers $k\in U$. 
Using the explicit $\Lambda$-adic Saito-Kurokawa lifting following Li \cite{Li} and Kawamura \cite{Ka}
and based on work of Stevens \cite{St}, 
in Section \ref{sectionone} we recall the construction of a $p$-adic family of Siegel modular forms 
\[F_\infty(k):=\sum_{T>0}\gera_T(k)q^T\] (where $k\in U$ and 
the sum runs over all positive definite, half-integral symmetric 
matrices $T$ of size $2 \times 2$) 
interpolating the $p$-stabilization of the Saito-Kurokawa lifting of the classical forms appearing 
in the Hida family $f_\infty(k)$. In particular,  
$F_\infty(2)$ is just the Fourier expansion of the Saito-Kurokawa lift of $f$ (well-defined up to 
non-zero complex factors).  
In Section \ref{sectiontwo},
we introduce a normalization of the coefficients $\gera_T(k)$, which we denote $\widetilde A_T(k)$;
these are $p$-adic analytic functions on a neighborhood $U$ of $2$ in $\mathcal X$.
The analysis of the modified Fourier coefficients $\widetilde A_T(k)$
is carried out in Section \ref{sectiontwo}, which contains the technical heart of the proof. We combine various results for the explicit 
Saito-Kurokawa lifting due to Ibukiyama \cite{Ibu} and explicit relations for Fourier coefficients 
on modular forms of half-integral weight due to Kohnen \cite{Ko2}.  In particular, 
the paper \cite{Ibu} generalizes to arbitrary level 
various results on the Saito-Kurokawa lifting due to, among others, Eichler-Zagier \cite{EZ}, Kohnen \cite{Ko}, 
Manickam-Ramakrishnan-Vasudevan \cite{MRV} and  Manickam-Ramakrishnan \cite{MR}.  
In Section \ref{sectionthree}, we combine our explicit analysis of the modified Fourier 
coefficients $\widetilde A_T(k)$ with 
the work of Darmon-Tornar\'\i a \cite{DT} on half-integral weight modular forms 
to relate the values $\widetilde A_T(2)$ 
and $\widetilde A'_T(2)$ to global points 
on the elliptic curve $E$ associated with $f$
via the Eichler-Shimura construction. To be more precise, the family 
of $p$-adic Fourier coefficients $\{\widetilde A_T(k)\}$ can be divided into two subfamilies called of type I and II, respectively, 
corresponding to those $T$ for which $\widetilde A_T(2)$ do not need to vanish
and respectively to those $T$ such that $A_T(2)$ is forced to vanish, see
\S \ref{sec4.1} and eq. \eqref{D_T} for precise definitions. 
Thus, for $T$ of type II we have $A_T(2)=0$ 
and it natural to look at the value of the first derivative 
$A'_T(2)$ (with respect to $k$) at $k=2$. Our main result, corresponding to 
Theorem \ref{thm1} below, relates this derivative to a global point on the elliptic curve $E$, 
defined over a quadratic imaginary field depending on $T$. To state the result more precisely, we need 
some further notations. Let 
\begin{equation}\label{Phi-Tate}
\Phi_{\rm Tate}: \C_p^\times/q^\Z\longrightarrow E(\C_p)\end{equation} be Tate's $p$-adic uniformization, 
and let 
\[\log_E:E(\C_p)\longrightarrow\C_p\] 
be the $p$-adic formal group logarithm, defined by 
\begin{equation}\label{log}
\log_E(P):=\log_q(\Phi_{\rm Tate}^{-1}(P)),\end{equation}
where
$\log_q$ is the branch of the $p$-adic logarithm satisfying $\log_q(q)=0$.
Extend $\log_E$ by $\Q$-linearity to $E(\C_p)\otimes_\Z\Q$. 
Finally, say that $T=\smallmat u{v/2}{v/2}w$ is \emph{primitive} 
if 
$\gcd(u,v,w)=1$. 
Our main result can now be stated as follows. 

\begin{thm} \label{main-intro}
There exists a point $Q_T\in E(K_T)\otimes_\Z\Q$, 
where $K_T$ is an imaginary quadratic field depending on $T$, 
such that 
\begin{equation}\label{thm1-a}
\frac{\partial}{\partial k}\widetilde A_T(k)_{|k=2} = 
\log_E (Q_T).\end{equation}
Further, if $T$ is primitive, 
\begin{equation}\label{thm1-b}Q_T\neq 0\Longleftrightarrow L'(\SK(f),\chi_{K_T},1)\neq 0,
\end{equation} where $L(\SK(f),\chi_{K_T},s)$ is the Adrianov $L$-function attached to the Saito-Kurokawa 
lift $\SK(f)$ of $f$, twisted by the quadratic character $\chi_{K_T}$ of $K_T$. \end{thm}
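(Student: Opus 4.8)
The plan is to prove Theorem~\ref{main-intro} by transporting, via the explicit $\Lambda$-adic Saito-Kurokawa lifting, the known geometric description of Fourier coefficients of the $\Lambda$-adic Shintani lifting due to Darmon-Tornar\'\i a. First I would reduce the statement to a computation of the normalized coefficients $\widetilde A_T(k)$: using the explicit descriptions of the Saito-Kurokawa lift due to Ibukiyama and Kohnen's formulas for Fourier coefficients of half-integral weight forms, I would write $\widetilde A_T(k)$ in terms of the Fourier coefficients $c(k)$ of the half-integral weight form in the Kohnen plus space corresponding to $f_\infty(k)$ under the Shimura correspondence. The key point, carried out in Section~\ref{sectiontwo}, is that for a suitable normalization the coefficient $\widetilde A_T(k)$ depends only on the discriminant $-D_T$ of $T$ (up to the known primitivity/$\gcd$ factors recorded in eq.~\eqref{D_T}), so that it essentially equals the $(-D_T)$-th Fourier coefficient of a $\Lambda$-adic half-integral weight family. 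This identifies $\widetilde A_T(k)$, up to an explicit nonzero $p$-adic unit depending on $T$ but not on $k$, with the corresponding coefficient of the $\Lambda$-adic Shintani lift studied in \cite{DT}.

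Granting that identification, for $T$ of type II one has $\widetilde A_T(2)=0$, and the Darmon-Tornar\'\i a formula for the derivative of the $\Lambda$-adic Shintani coefficient at $k=2$ expresses $\tfrac{\partial}{\partial k}\widetilde A_T(k)_{|k=2}$ as $\log_E$ of a Stark-Heegner (Darmon) point $Q_T$ attached to the real quadratic order of discriminant $D_T$—except that in our reducible Saito-Kurokawa setting the relevant discriminant $-D_T$ is \emph{negative}, so the point lives over an imaginary quadratic field $K_T=\Q(\sqrt{-D_T})$ and is in fact a classical Heegner point (or a combination thereof) rather than a genuinely non-classical Darmon point. I would therefore need to check that the $p$-adic construction of \cite{DT} specializes correctly to the definite/imaginary quadratic case, producing $Q_T\in E(K_T)\otimes\Q$, and that the chain of explicit nonzero normalizing factors introduced in Section~\ref{sectiontwo} does not spoil the equality \eqref{thm1-a}. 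This yields the first assertion.

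For the equivalence \eqref{thm1-b}, I would invoke the factorization of the Andrianov $L$-function of a Saito-Kurokawa lift: $L(\SK(f),\chi_{K_T},s)$ factors (up to shifts and elementary Euler factors) as a product involving $L(f,\chi_{K_T},s)$ and $L(f,\chi_{K_T},s-1)$ (equivalently, as $L(f/K_T,s)\zeta(s-1)\zeta(s)$-type expressions twisted appropriately), so that its central derivative is governed by the central value/derivative of $L(f,\chi_{K_T},s)$. Since $f$ corresponds to $E$, Gross-Zagier (and the nonvanishing criterion for Heegner points) gives $L'(f/K_T,1)\neq 0\iff$ the Heegner point has infinite order $\iff Q_T\neq 0$ in $E(K_T)\otimes\Q$, provided the relevant sign of the functional equation is $-1$; the primitivity hypothesis on $T$ together with the $\gcd$ conditions in \eqref{D_T} is exactly what guarantees we are in this sign situation and that no parasitic $\zeta$-factor vanishes. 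Combining the factorization with Gross-Zagier and the first part of the theorem gives \eqref{thm1-b}.

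The main obstacle, which I expect to occupy the technical heart of the argument in Section~\ref{sectiontwo}, is establishing that the normalized coefficient $\widetilde A_T(k)$ genuinely coincides with a coefficient of the $\Lambda$-adic Shintani lift of \cite{DT} on the nose: this requires reconciling three different explicit but independently-normalized descriptions of Saito-Kurokawa Fourier coefficients (Ibukiyama's adelic/level-$Np$ formulas, Kohnen's plus-space formulas, and the Waldspurger-type proportionality to $L$-values), tracking the precise powers of $p$, the $p$-stabilization factors $\alpha_p(k)$, and the discriminant-dependent constants through the interpolation, and showing that all $k$-dependence other than that of the Shintani coefficient cancels. Once this is done, equations \eqref{thm1-a} and \eqref{thm1-b} follow by combining \cite{DT}, the Andrianov $L$-function factorization, and Gross-Zagier as above.
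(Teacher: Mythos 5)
Your overall strategy is the paper's: factor $\widetilde A_T(k)$ through the $\Lambda$-adic Shintani coefficient via the Maa\ss-type relations of Ibukiyama and Kohnen's formula, then quote Darmon--Tornar\'\i a for the derivative at $k=2$ and the Andrianov factorization $L(\SK(f),\chi,s)=L(\chi,s)L(\chi,s-1)L(f,\chi,s)$ for the $L$-value equivalence. (Incidentally, no ``specialization to the imaginary quadratic case'' of \cite{DT} is required: their Theorem 1.5 is already stated for fundamental discriminants $\gerd<0$ of type II, with the point lying in $E(\Q(\sqrt{\gerd}))^-\otimes_\Z\Q$; the passage from real quadratic Stark--Heegner points through genus characters to these global points is carried out in \cite{BD} and is simply quoted.)

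However, the central identification is stated incorrectly, and the error matters. Writing $-D_T=\gerd_T\gerf_T^2$, the computation of Section \ref{sectiontwo} gives $\widetilde A_T(k)=\widetilde c_{|\gerd_T|}(k)\,\widetilde n_T(k)$ with $\widetilde n_T(k)$ the analytic extension of $\sum_{d\in S_T}d^{k/2}\rho_{\gerd_T,\gerf_T/d}(k)$, a quantity built from the Hecke eigenvalues $a_n(k)$ of the Hida family. This factor is emphatically not ``a nonzero $p$-adic unit depending on $T$ but not on $k$'': it genuinely depends on $k$, and its value $n_T=\widetilde n_T(2)$ is an integer which may vanish. Two consequences follow. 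First, computing $\frac{\partial}{\partial k}\widetilde A_T(k)_{|k=2}$ requires the product rule, and the unwanted term $\widetilde c_{|\gerd_T|}(2)\cdot\frac{\partial}{\partial k}\widetilde n_T(k)_{|k=2}$ must be killed by the type II vanishing $\widetilde c_{|\gerd_T|}(2)=0$; this step is missing from your outline, and the resulting point is $Q_T=n_T\cdot P_{\gerd_T}$, not $P_{\gerd_T}$ itself. Second, the role of the primitivity hypothesis in \eqref{thm1-b} is to force $S_T=\{1\}$ and hence control $n_T$, so that $Q_T\neq 0\Longleftrightarrow P_{\gerd_T}\neq 0$; it has nothing to do with the sign of the functional equation (which is $-1$ because $\gerd_T$ is of type II) nor with ``parasitic $\zeta$-factors'' (the factors $L(\chi_{\gerd_T},0)L(\chi_{\gerd_T},1)$ are automatically nonzero for $\gerd_T<0$ fundamental). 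Once the factor $\widetilde n_T(k)$ is tracked correctly, the remainder of your argument --- DT's theorem, the factorization \eqref{derivatives}, and Gross--Zagier via Bertolini--Darmon, which is precisely how part (2) of DT's theorem is proved --- coincides with the paper's proof.
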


The above equations \eqref{thm1-a} and \eqref{thm1-b} in Theorem \ref{main-intro}
may be viewed as an analogue of \cite[Theorems 1.5, 5.1]{DT} 
where similar results are established for the Shintani lifting in lieu of the Saito-Kurokawa lifting.

We also remark that a more general version 
\eqref{thm1-b} above is proven in the text, relaxing the primitivity assumption on $T$. 
For more general $T$ of type II, the relation 
\eqref{thm1-b} 
between the global point 
$Q_T$ and the $L$-function $L(\SK(f),\chi_{K_T},s)$ depends on 
the value of a certain explicit integer attached to $T$,  
denoted $n_T$ in the text. 
If $n_T=0$, the condition $L'(\SK(f),\chi_{K_T},1)\neq0$ does not 
imply the non-vanishing of the point $Q_T$. See Remark \ref{remark-n_T} for an explicit 
description of $n_T$ and a discussion on its possible vanishing. 

Our second main result, Theorem \ref{thm2}, deals instead with $T'$ of type I. 
Having fixed a $T$ of type II, we show that, at least 
when $T'$ is primitive of type I, the product 
$\widetilde A_{T'}(2)\log_E(Q_T)$ is equal to a quantity  
(denoted by $J(f,\gerd_T,\gerd_{T'})$ in the text) obtained as a
$p$-adic variation of classical Shintani geodesic integrals attached to $f$ 
and certain quadratic forms depending on $T$ and $T'$,
see \eqref{shintani} for details and definitions. This result also holds without the assumption, made 
in the introduction, that $N>1$. 

We mention that Kawamura    
investigated in \cite{Ka} applications of $p$-adic methods ˆ la Hida to the Ikeda lifting, introducing more general $p$-stabilized families of Siegel modular forms.
Also, we finally point out that Brumer-Kramer refined Yoshida's conjecture in the paramodular case, conjecturing a bijection between isogeny classes of abelian surfaces $A/ \Q$ of conductor $M$ with $\End_{\Q}(A) = \Z$ and weight $2$ paramodular newforms of level $M$ with rational eigenvalues that are not Gritsenko lifts, with an equality of $L$-series (see \cite{BK}). Brumer-Kramer's conjecture has been verified numerically for small prime levels by Poor-Yuen \cite{PY}. As a complement, it would thus be interesting to see if our arguments can be adapted to the paramodular case.


\section{The $\Lambda$-adic Saito-Kurokawa lifting}\label{sectionone}

In this section, we construct an explicit 
$\Lambda$-adic Saito-Kurokawa lifting, using a key result of Stevens \cite{St} and following Kawamura \cite{Ka}. See also the anterior work \cite{Gu} by Guerzhoy. 

Let $f=\sum_{n=1}^\infty a_nq^n$ be a newform of weight 2 
and level $\Gamma_0(Np)$, where $N\geq 1$ is a fixed odd squarefree integer, 
$p\nmid N$ is an odd prime number and $a_n\in\Z$ for 
all $n\geq 1$. Consider the Hida family
$f_\infty(k)$ passing through $f$ introduced in
\eqref{Hida-family};   
recall that $k$ belongs to 
a neighborhood $U$ of 2 in the weight space $\mathcal X$,
with $U$ contained in the residue class of $2$ 
mod $p-1$, and  
$a_1(k)=1$ for all $k\in U$. 
For any integer $k\geq 2$ in $U$, let $f_k:=f_\infty(k)$ denote the 
$k$-specialization of 
$f_\infty$. We also let $\Lambda:=\Z_p[\![1+p\Z_p]\!]$ denote 
the Iwasawa algebra of $1+p\Z_p$ with coefficients in $\Z_p$. 

We first recall a well-known result of Stevens \cite{St} on the $\Lambda$-adic 
Shintani lifting. 
Let $\theta(f_k)\in S_{(k+1)/2}(4Np)$ denote the Shintani lifting of $f_k$, whose definition 
is recalled, for example, in \cite[\S 2.2]{St}; here $S_{(k+1)/2}(4Np)$ denotes the $\C$-vector space 
of modular forms of half integral weight $(k+1)/2$ and level $4Np$. 
In particular, $\theta(f_k)$ is well-defined
only up to a complex non-zero factor. 
A result of Shimura \cite{Sh} (cf. \cite[Proposition 2.3.1]{St}), asserts the
existence, for any positive even integer $k$, of a complex number $\Omega_{f_k}^-$ 
such that 
\[\frac{\theta(f_k)}{\Omega_{f_k}^-}\in \mathcal O_{f_k}\] where 
$\mathcal O_{f_k}$ denotes the ring of integers of the finite extension 
$K_{f_k}$ generated over $\Q$ be the Fourier coefficients of $f_k$.

\begin{rmk} \label{rem2.1}
$\Omega_{f_k}^-$ is defined in \cite[Theorem 4.8]{GS} in such a way that 
$\Omega_{f_k}^-\Phi_{f_k}$ (where $\Phi_{f_k}$ is the standard modular 
symbol associated to $f_k$ as in \cite[Definition 4.7]{GS}) spans the one-dimensional 
$K_{f_k}$-vector space consisting of ${\rm Sym}^{k-2}(K_{f_k}^2)$-valued $\Gamma_0(Np)$-invariant modular symbols
where the involution $\smallmat  {-1}001$ acts as $-1$ and the Hecke algebra acts through the character associated 
to $f_k$, cf. \cite[Section 4]{GS}.  \end{rmk}

We recall now the following result 
due to Stevens (see \cite[Theorem 3.3]{St}).

\begin{thm}[Stevens]\label{thm-stevens} There is a formal power expansion 
\[\Theta(k)=\sum_{n=1}^\infty\widetilde\gerb_n(k)q^n\] (where $\widetilde\gerb_n$ are $p$-adic analytic functions defined 
on $U$) and, for any positive even integer $k$, a $p$-adic number $\Omega_{k} \in\bar\Q_p$, 
such that:
\begin{enumerate}
\item $\Omega_2\neq 0$; 
\item 
\[\Theta(k)=\frac{\Omega_{k}}{\Omega_{f_{k}}^-}\theta(f_{k}).\]\end{enumerate}
\end{thm}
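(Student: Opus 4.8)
\textbf{Proof strategy for Theorem \ref{thm-stevens}.}
The plan is to descend the classical Shintani lifting through a $\Lambda$-adic machinery and to recognize the result as a formal power series with $p$-adic analytic coefficients. First I would recall that the Shintani lifting admits a cohomological description: for each positive even integer $k$, the half-integral weight form $\theta(f_k)$ can be realized as a generating series whose $n$-th Fourier coefficient is obtained by pairing the modular symbol $\Phi_{f_k}$ (the standard symbol normalized as in Remark \ref{rem2.1}, so that $\Omega_{f_k}^-\Phi_{f_k}$ takes values in the lattice $\mathrm{Sym}^{k-2}(\mathcal O_{f_k}^2)$) against an explicit $\mathrm{Sym}^{k-2}$-valued cycle built from the geodesic attached to the binary quadratic form of discriminant $-4nNp$ (or the relevant level-$4Np$ variant). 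This is the content of the period computation reproduced in \cite[\S 2.2]{St}: the coefficient $\mathfrak b_n(f_k)/\Omega_{f_k}^-$ is a finite $\Z$-linear combination of values of the integral-valued symbol $\Omega_{f_k}^-\Phi_{f_k}$ on a fixed finite set of paths depending only on $n$, not on $k$.

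Second, I would invoke the existence of a $\Lambda$-adic (Hida-theoretic) modular symbol interpolating the $\Phi_{f_k}$: by Greenberg–Stevens / Hida theory there is a cohomology class $\Phi_\infty$ over $U$ with values in a space of measures such that its weight-$k$ specialization is a nonzero multiple of $\Phi_{f_k}$; the normalizations $\Omega_k$ are precisely the $p$-adic periods recording the discrepancy between the specialization $\Phi_\infty(k)$ and the ``optimally normalized'' symbol $\Omega_{f_k}^-\Phi_{f_k}$, and $\Omega_2 \neq 0$ because the weight-$2$ specialization is a nonzero multiple of the symbol attached to $f$ itself. Applying the same explicit geodesic-cycle pairing that computes $\theta(f_k)$, but now to the $\Lambda$-adic symbol $\Phi_\infty$, produces for each fixed $n$ a $p$-adic analytic function $\widetilde{\mathfrak b}_n(k)$ on $U$: analyticity follows because the pairing against a fixed finite set of paths is a finite sum of evaluations of a measure-valued analytic family against locally analytic weight-$k$ characters. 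Assembling these gives the formal series $\Theta(k)=\sum_{n\geq 1}\widetilde{\mathfrak b}_n(k)q^n$.

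Third, I would check the interpolation property (2): by construction, specializing $\Phi_\infty$ at an even integer $k\in U$ and feeding it into the geodesic pairing recovers, coefficient by coefficient, exactly $(\Omega_k/\Omega_{f_k}^-)$ times the classical Shintani coefficient, so $\Theta(k)=(\Omega_k/\Omega_{f_k}^-)\,\theta(f_k)$ as formal $q$-expansions; the fact that both sides are honest half-integral weight forms for integral $k$ is then automatic from the classical theory. The claim $\Omega_2\neq 0$ in (1) reduces to the non-degeneracy of Hida's control theorem at weight $2$ together with the fact that $f$ is $p$-ordinary and $p$-stabilized, so the weight-$2$ specialization of $\Phi_\infty$ does not vanish.

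The main obstacle is the second step: constructing the $\Lambda$-adic modular symbol and verifying that the classical geodesic-cycle description of the Shintani lifting \emph{commutes with specialization}, i.e. that the same finite combinatorial recipe which computes $\theta(f_k)$ from $\Phi_{f_k}$ also computes $\Theta(k)$ from $\Phi_\infty(k)$ with matching normalizations. This is exactly where one must be careful about the compatibility between the weight-$k$ branching maps $\mathrm{Sym}^{k-2}$ and the measure-valued coefficients, and about tracking the period factors $\Omega_k$ uniformly in $k$. Fortunately this is precisely Theorem 3.3 of \cite{St}, whose proof we are recalling rather than reproving; so in the text I would cite \cite[Theorem 3.3]{St} for the construction and interpolation, and merely indicate the cohomological picture above as motivation and as the source of the analyticity of the $\widetilde{\mathfrak b}_n$.
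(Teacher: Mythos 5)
Your proposal is correct and takes essentially the same route as the paper: the text states this result as a quoted theorem of Stevens and simply cites \cite[Theorem 3.3]{St} without reproving it, which is exactly what you conclude you would do. Your accompanying sketch of the underlying mechanism (interpolating the Shintani geodesic-cycle pairing through a $\Lambda$-adic modular symbol, with the $p$-adic periods $\Omega_k$ measuring the discrepancy against the normalization $\Omega_{f_k}^-\Phi_{f_k}$ of Remark \ref{rem2.1}, and $\Omega_2\neq 0$ coming from the weight-$2$ control theorem) is a faithful account of Stevens' argument and is consistent with how the paper uses the result.
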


We shall use the above result to construct an explicit $\Lambda$-adic Saito-Kurokawa lifting. 
This is well-known to experts and follows in particular from a suitable correction of Li's thesis \cite{Li}, 
following Kawamura \cite{Ka}. We recall the constructions
which will be used later. 
For an  integer $k >2$ in $U$, let $F_k:=\SK_{Np}(f_k)$ denote the Saito-Kurokawa lifting of $f_k$ of level $Np$, 
the existence of which is proved in this generality in \cite{Ibu}. (Note that 
$F_k$ is is well-defined only up to a non-zero complex number.) The 
Siegel modular form $F_k$ has weight $k/2+1$ and level $\Gamma_0^2(Np)$ where, 
for any integer $M\geq 1$, we define  
\[\Gamma_0^2(M)=\left\{\smallmat ABCD\in{\rm Sp}_2(\Z)|C\equiv 0\mod M\right\}.\] 
The Fourier expansion of $F_k$ can be written as follows: 
\[
F_k (Z)=\sum_{T>0}\gera_T(k)q^T
\] where the sum is over all positive definite, half-integral 
symmetric matrices $T$ of size $2 \times 2$ and $q^T:=\exp\big(2\pi i \tr(TZ)\big)$. 
If we write 
\[g_k :=\theta(f_k)=\sum_{\text{$D\geq 1$; $(-1)^{k/2}D\equiv 0,1$ mod{4} }}\gerb_D(k)q^D
\] for the Shintani lifting of $f_k$ (which is well-defined only up to a non-zero 
complex number), 
then the image of the Saito-Kurokawa lifting 
satisfies the following relation, thanks to \cite[\S 3.4]{Ibu}: 
\begin{equation}\label{eq1bis}
\gera_T(k)=\sum_{d>0;\ d\mid c(T);\ (Np,d)=1}\gerb_{D_T/d^2}(k)d^{k/2}
\end{equation} where $D_T=\det(2T)$ and, for $T=\smallmat u{v/2}{v/2}w$,
$c(T):=\gcd(u,v,w)$. Of course, the only $d$'s contributing to the above sum 
are those for which $(-1)^{k/2}D_T/d^2\equiv 0,1$ mod 4. 
Define for $k\in U$ 
\[\widetilde\gera_T(k)=\sum_{d>0;\ d\mid c(T);\ (Np,d)=1}\widetilde\gerb_{D_T/d^2}(k).\] Then 
as an immediate corollary of Theorem \ref{thm-stevens} and the above discussion,
we find:

\begin{cor}[Li]\label{Li} The formal power series expansion: 
\[F_\infty(k):=\sum_{T>0}\widetilde\gera_{T}(k)q^T\] defined for $k\in U$ 
satisfies
\[F_\infty(k)=\frac{\Omega_k}{\Omega_{f_k}^-}F_k\] for any positive integer $k\in U$, where the 
$p$-adic numbers $\Omega_k$ 
are defined in Theorem \ref{thm-stevens}. 
\end{cor}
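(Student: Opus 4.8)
The plan is to trace through the definitions and reduce everything to Stevens' theorem (Theorem \ref{thm-stevens}) applied coefficient-by-coefficient. First I would fix a positive integer $k\in U$ and recall that, by definition, $F_\infty(k)=\sum_{T>0}\widetilde\gera_T(k)q^T$ with
\[\widetilde\gera_T(k)=\sum_{d>0;\ d\mid c(T);\ (Np,d)=1}\widetilde\gerb_{D_T/d^2}(k),\]
while $\widetilde\gerb_n(k)$ is the $n$-th coefficient of Stevens' $\Lambda$-adic form $\Theta(k)=\sum_n\widetilde\gerb_n(k)q^n$. By Theorem \ref{thm-stevens}(2) we have $\Theta(k)=\frac{\Omega_k}{\Omega_{f_k}^-}\theta(f_k)$, i.e. for every $n$,
\[\widetilde\gerb_n(k)=\frac{\Omega_k}{\Omega_{f_k}^-}\,\gerb_n(k),\]
where $\gerb_n(k)$ is the $n$-th Fourier coefficient of the Shintani lift $g_k=\theta(f_k)$ (using the notation of the discussion preceding the corollary; note $g_k$ is only defined up to a nonzero complex scalar, which is exactly the ambiguity absorbed into $\Omega_k/\Omega_{f_k}^-$).

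Next I would substitute this linear relation into the formula for $\widetilde\gera_T(k)$. Because the sum defining $\widetilde\gera_T(k)$ is finite and the scalar $\Omega_k/\Omega_{f_k}^-$ does not depend on $d$, we may pull it out:
\[\widetilde\gera_T(k)=\sum_{d\mid c(T),\,(Np,d)=1}\frac{\Omega_k}{\Omega_{f_k}^-}\gerb_{D_T/d^2}(k)=\frac{\Omega_k}{\Omega_{f_k}^-}\sum_{d\mid c(T),\,(Np,d)=1}\gerb_{D_T/d^2}(k).\]
Now for \emph{integer} $k$ the inner sum is, by the Ibukiyama relation \eqref{eq1bis} together with the observation there that only those $d$ with $(-1)^{k/2}D_T/d^2\equiv 0,1\bmod 4$ actually contribute, exactly the $T$-th Fourier coefficient $\gera_T(k)$ of the classical Saito-Kurokawa lift $F_k=\SK_{Np}(f_k)$ — up to the factor $d^{k/2}$. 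Here I must be careful: \eqref{eq1bis} reads $\gera_T(k)=\sum_{d\mid c(T),(Np,d)=1}\gerb_{D_T/d^2}(k)d^{k/2}$, so the quantity $\sum_d\gerb_{D_T/d^2}(k)$ (without the $d^{k/2}$) is the $T$-th coefficient of the \emph{normalized} Saito-Kurokawa lift in which the half-integral weight coefficients are taken unweighted; this is precisely the normalization implicit in the statement (the $d^{k/2}$ having already been divided out, cf. the passage from $\gera_T(k)$ to $\widetilde\gera_T(k)$). Thus $\sum_{d\mid c(T),(Np,d)=1}\gerb_{D_T/d^2}(k)$ equals the $T$-th coefficient of a fixed scalar multiple $F_k$ of the Saito-Kurokawa lift (that scalar being the same for all $T$ since it only involves the global normalization of $\theta(f_k)$), and summing over $T$ gives $\sum_{T>0}\big(\sum_d\gerb_{D_T/d^2}(k)\big)q^T=F_k$ as a formal Fourier series.

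Putting the two displays together yields $F_\infty(k)=\frac{\Omega_k}{\Omega_{f_k}^-}F_k$ as formal $q$-expansions, which is the assertion. Finally I would note that for $k=2$ the term $d^{k/2}=d$ in \eqref{eq1bis} and the unweighted sum still agree up to the same global scalar (or, if one prefers, observe that $c(T)$-divisor sums for $T$ primitive are trivial), so the identity is consistent at the base point; and $\Omega_2\neq 0$ by Theorem \ref{thm-stevens}(1), so $F_\infty(2)$ is a genuine (nonzero-scalar multiple of the) Saito-Kurokawa lift of $f$, as claimed in the surrounding text. The only genuine subtlety — and the step I would be most careful about — is bookkeeping the various normalizations: matching the $d^{k/2}$-weighting in Ibukiyama's formula \eqref{eq1bis} against the unweighted sum defining $\widetilde\gera_T(k)$, and checking that the discrepancy is a single $T$-independent (though possibly $k$-dependent) scalar that can be harmlessly folded into the already-present ambiguity "$F_k$ well-defined up to a nonzero complex number." Once that is pinned down, the rest is a one-line linearity argument.
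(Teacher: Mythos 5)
Your overall strategy --- substitute Stevens' relation $\widetilde\gerb_n(k)=\frac{\Omega_k}{\Omega_{f_k}^-}\gerb_n(k)$ coefficient by coefficient and pull the $T$-independent scalar out of the finite sum over $d$ --- is exactly the paper's (unwritten) argument; the corollary is presented as an ``immediate'' consequence of Theorem \ref{thm-stevens} and \eqref{eq1bis}. The problem is precisely the step you yourself flag as the delicate one. You observe correctly that the displayed definition of $\widetilde\gera_T(k)$ carries no weight $d^{k/2}$ while \eqref{eq1bis} does, and you resolve this by asserting that the unweighted sum $\sum_{d}\gerb_{D_T/d^2}(k)$ and the weighted sum $\sum_{d}\gerb_{D_T/d^2}(k)d^{k/2}$ differ by a single $T$-independent scalar that can be folded into the normalization of $F_k$. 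That is false: the weights $d^{k/2}$ vary with $d$ ranging over the divisors of $c(T)$, so for a non-primitive $T$ (say $c(T)=2$ with $-D_T/4\equiv 0,1\bmod 4$) the two sums are $\gerb_{D_T}(k)+\gerb_{D_T/4}(k)$ versus $\gerb_{D_T}(k)+2^{k/2}\gerb_{D_T/4}(k)$, and their ratio depends on $T$ through $\gerb_{D_T/4}(k)/\gerb_{D_T}(k)$. No global rescaling of $\theta(f_k)$ or of $F_k$ can reconcile all Fourier coefficients simultaneously, so the claimed reduction does not go through.

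The correct resolution is that the displayed definition of $\widetilde\gera_T(k)$ contains a typo: it should read $\widetilde\gera_T(k)=\sum_{d\mid c(T),\,(Np,d)=1}\widetilde\gerb_{D_T/d^2}(k)\,d^{k/2}$, in parallel with \eqref{eq1bis}, \eqref{eq-final-version} and \eqref{DEF-n-T}, all of which carry the weight $d^{k/2}$ (this is $p$-adically meaningful because $(d,p)=1$ and $U$ lies in a fixed residue class mod $p-1$). With that definition the identity $\widetilde\gera_T(k)=\frac{\Omega_k}{\Omega_{f_k}^-}\gera_T(k)$ is an honest term-by-term substitution of Stevens' theorem into \eqref{eq1bis}, and the corollary follows exactly as in the first half of your argument. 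So your proof is right in spirit and you have put your finger on the real issue, but the justification you give for dismissing the $d^{k/2}$ discrepancy is not valid; the fix is to correct the definition, not to argue that the two normalizations agree up to a global constant.
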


\begin{rmk}
In general, $F_k$ is not an ordinary form (in the sense that $F_k$ does not need to be an eigenform for $p$-Hecke operators 
such that the eigenvalues are $p$-adic units). 
One may ask for (semi-)ordinarity conditions by  
considering another family of Siegel modular forms interpolating $p$-adic stabilizations (in a suitable sense) 
of the Saito-Kurokawa lifting of the classical forms  
$f_k$ in the Hida family. This is the point of view taken in 
Kawamura's paper \cite{Ka}.  See 
\cite[Theorem 4.4]{Ka} for the construction of the $\Lambda$-adic family of Siegel modular 
forms interpolating suitable $p$-stabilizations of the Ikeda lifting of the classical forms $f_k$ in the 
Hida family.\end{rmk}

Instead of working with $F_k$, we will work with related forms $F_k^\sharp$, 
defined as follows. 
For an  integer $k >2$ in $U$, let $f_k^\sharp$ 
denote the unique newform of weight $k$ and level $\Gamma_0(N)$ such that 
\[f_k(z)=f_k^\sharp(z)-a_p(k)^{-1}p^{k-1}f_k^\sharp(pz).\]
We remark that, since $f$ is a newform, $f_k^\sharp$ is newform too
(see for example \cite[Corollary 1.3]{Hida} for details). 
Let $F_k^\sharp=\SK_N(f_k^\sharp)$ denote the Saito-Kurokawa lifting of level $N$ of $f_k^\sharp$, 
for an integer $k>2$ in $U$ (see again \cite{Ibu}, and note again that
$F_k^\sharp$ is well-defined only up to a non-zero complex number). The 
Siegel modular form $F_k^\sharp$ has weight $k/2+1$ and level $\Gamma_0^2(N)$, 
and its Fourier expansion can be written as
\[
F_k^\sharp(Z)=\sum_{T>0}A_T(k)q^T
\]  (same conventions as above). 
If we write 
\[g_k^\sharp:=\theta(f_k^\sharp)=\sum_{\text{$D\geq 1$; $(-1)^{k/2}D\equiv 0,1$ mod{4} }}c_D(k)q^D
\] for the Shintani lifting of $f_k^\sharp$, then the image of the Saito-Kurokawa lifting 
satisfies the following relation 
\begin{equation}\label{eq1}
A_T(k)=\sum_{d>0;\ d\mid c(T);\ (N,d)=1}c_{D_T/d^2}(k)d^{k/2}
\end{equation} (again, use \cite[\S 3.4]{Ibu}, with the same notations and conventions as above).
To use uniform notations, 
also denote by $c_D(2)$ instead of $\gerb_D(2)$ the Fourier coefficients of $g_2$, 
and by $A_T(2)$ instead of $\gera_T(2)$ the the Fourier coefficients of $F_2$. 

\begin{rmk} Equations \eqref{eq1} and \eqref{eq1bis} can be interpreted as an analogue 
for arbitrary level of the Maa\ss \ relations describing the image (called the Maa\ss \ Spezialschar) 
of the Saito-Kurokawa lifting in level 1. See \cite[Chap. 6]{EZ}, \cite[Proposition 3]{Ko}
and \cite{Ad} for precise references. For general levels, we refer to the discussion in \cite{Ibu} on the possibility 
of describing the image of the Saito-Kurokawa lifting in terms of such relations. 
\end{rmk}

To correct the asymmetry in \eqref{eq1} and \eqref{eq1bis} arising from the fact that
the condition $p\nmid d$ is in \eqref{eq1} but not in \eqref{eq2}, 
for integers $k\geq 4$ in $U$ we introduce still another family of Siegel modular forms
\[\widetilde F_k (Z)= F_k^\sharp(Z) - 
p^{k/2}F_k^\sharp(pZ).\] 
By \cite[Prop. 3.7]{Ibu}, we know that $\widetilde F_k$ is a modular form of weight $k/2+1$ and 
level $\Gamma_0^2(Np)$. 
Also, define for any positive integer $k\in U$: 
\begin{equation}\label{eq-final-version}
{A}^{(p)}_T(k) := \sum_{d>0;\ d\mid c(T);\ (d,Np)=1}c_{D_T/d^2}(k)d^{k/2}\end{equation} obtained from 
$A_T(k)$ by excluding the coefficients such that $p\mid c(T)$.
An easy computation shows that, for any integer $k\geq 2$ in $U$, 
\[
p^{k/2}F_k^\sharp(pZ)=\sum_{T>0;\ p\mid c(T);}\left(\sum_{p\mid d\text{ and }d\mid c(T)}c_{D_{T}/d^2}(k)d^{k/2}\right)q^T,\]
from which we have: 
\begin{equation}\label{eq2}
\widetilde F_k = \sum_{T>0} {A}_T^{(p)}(k) q^T.\end{equation} To use uniform notations, we will write $A^{(p)}_T(2)$ 
for $\gera_T(2)$. 
It follows immediately from \cite[\S 3.4]{Ibu} that 
$\widetilde F_k$ is the Saito-Kurokawa lifting of level $Np$ of the form $f_k^\sharp$, viewed as a modular form of level $Np$.  
Since the family of modular forms obtained in Corollary \ref{Li} above interpolates the modular forms $F_k$
for $k\in U$, we have 
$F_k\neq \widetilde F_k$ if $k\neq 2$. 
However, $F_k$ and $\widetilde F_k$
share interesting Fourier coefficients, as we will explain in the next section (see in particular Remark \ref{remark3.3}).

\section{Normalized Fourier coefficients}\label{sectiontwo}
In this section we modify the Fourier coefficients $A_T^{(p)}(k)$ introduced in the previous section 
in order to relate them with the work of Darmon-Tornar\'ia \cite{DT}.

We first recall some results on the coefficients $c_D(k)$.
For any prime 
number $\ell\mid Np$, denote by $w_\ell\in\{\pm 1\}$ the eigenvalue of the Atkin-Lehner involution $W_\ell$ 
acting on $f$. We first observe that, since $f_k^\sharp$ has trivial character, $g_k^\sharp$ is zero if and only if 
$k/2$ is even, 
and $f_2\neq 0$ (see for example \cite[page 137]{Sh}). 
Hence, if $c_D(k)\neq 0$ then $k/2\geq 1$ is odd and 
$-D\equiv 0,1$ modulo $4$. Also, by 
\cite[Corollary 1]{Ko2}, if $k>2$ and $k/2$ is odd, then $c_D(k)=0$ unless $\left(\frac{-D}{\ell}\right)=w_\ell$ for all 
primes $\ell\mid N$ and $c_D(2)=0$ unless $\left(\frac{-D}{\ell}\right)=w_\ell$ for all 
primes $\ell\mid Np$. 

Motivated by the above discussion, choose   
a fundamental discriminant $D_0$ such that $c_{D_0}(2)\neq 0$ and let $D>0$ be an integer
such that $p\nmid D$ and $\left(\frac{-D}{\ell}\right)=w_\ell$ for all 
primes $\ell\mid N$, so that the family $c_D(k)$ need not vanish identically for all $k$ 
(but note that $c_D(2)$ vanishes if $\left(\frac{-D}{p}\right)=-w_p$, for example). 

\begin{prop}\label{prop1} Let $D$ and $D_0$ be as above. 
There is a $p$-adic neighborhood $\mathcal U\subseteq U$ of $2$ such that:
\begin{enumerate} 
\item $c_{D_0}(k)\neq 0$ for $k\in\Z\cap\mathcal U$ with $k\geq 2$.
\item The function $k\mapsto \widetilde c_D(k)$, defined for integers $k\geq 4$ in $U$, 
by:  \[\widetilde c_D(k):=  \frac{\left(1-\left(\frac{-D}{p}\right)a_p(k)^{-1}p^{k/2-1}\right)c_D(k)}
{\left(1-\left(\frac{-D_0}{p}\right)a_p(k)^{-1}p^{k/2-1}\right)c_{D_0}(k)}=\frac{c_{Dp^2}(k)}{c_{D_0p^2}(k)}\]
extends to a $p$-adic analytic function on $\mathcal U$ satisfying: 
\[\widetilde c_D(2)=\frac{c_{Dp^2}(2)}{c_{D_0p^2}(2)}=\frac{c_D(2)}{c_{D_0}(2)}.\]
\end{enumerate}
\end{prop}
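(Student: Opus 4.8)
The plan is to build the analytic function $\widetilde c_D(k)$ out of the $\Lambda$-adic Shintani coefficients $\widetilde\gerb_n(k)$ of Theorem~\ref{thm-stevens}, which are $p$-adic analytic on $U$ by construction, and to control their vanishing near $k=2$. First I would express the classical Shintani coefficients $c_D(k)$ of $g_k^\sharp=\theta(f_k^\sharp)$ in terms of the $\gerb_D(k)$ of $g_k=\theta(f_k)$: since $f_k(z)=f_k^\sharp(z)-a_p(k)^{-1}p^{k-1}f_k^\sharp(pz)$ and the Shintani lifting is Hecke-equivariant, the coefficient $\gerb_{Dp^2}(k)$ decomposes (using the action of the $U_p$/$T_p$-operator on half-integral weight forms and the explicit formulas in \cite[\S2.2]{St}, \cite{Sh}) so that $\gerb_{Dp^2}(k)=\bigl(1-\bigl(\tfrac{-D}{p}\bigr)a_p(k)^{-1}p^{k/2-1}\bigr)c_D(k)$ up to a fixed normalization; the point is that the $p$-stabilization from level $N$ to level $Np$ on the half-integral side corresponds exactly to multiplication by this Euler-type factor, and that $c_{Dp^2}(k)=\gerb_{Dp^2}(k)$ as coefficients of $g_k$ itself. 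Passing to Stevens' family, $\widetilde\gerb_{Dp^2}(k)=\frac{\Omega_k}{\Omega_{f_k}^-}\gerb_{Dp^2}(k)$ for integral even $k$, so the ratio $\widetilde\gerb_{Dp^2}(k)/\widetilde\gerb_{D_0p^2}(k)$ kills the transcendental/ill-defined factor $\Omega_k/\Omega_{f_k}^-$ and equals $c_{Dp^2}(k)/c_{D_0p^2}(k)$ at every integral even $k\geq 2$ in $U$; this is the middle expression in the displayed chain.

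The second step is the analyticity and non-vanishing statement. Since $\widetilde\gerb_{D_0p^2}(k)$ is $p$-adic analytic on $U$ and, by Theorem~\ref{thm-stevens}(1), $\Omega_2\neq 0$, while $c_{D_0}(2)\neq 0$ by the choice of $D_0$, we get $\widetilde\gerb_{D_0p^2}(2)\neq 0$. Here I must be slightly careful: at $k=2$ the "$p$-stabilization" factor $\bigl(1-\bigl(\tfrac{-D_0}{p}\bigr)a_p(2)^{-1}p^{0}\bigr)=1-\bigl(\tfrac{-D_0}{p}\bigr)a_p(2)^{-1}$ could in principle vanish, but $D_0$ was chosen to be a fundamental discriminant with $c_{D_0}(2)\neq 0$ meaning $c_{D_0}(2)$ as a coefficient of $g_2=\theta(f_2)$, which already lives at level $4Np$, so $\widetilde\gerb_{D_0p^2}(2)=c_{D_0}(2)\neq 0$ directly — the Euler factor normalization is a feature of the $k>2$ comparison with level $N$, not of the value at $2$. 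By continuity of $k\mapsto\widetilde\gerb_{D_0p^2}(k)$ there is a $p$-adic neighborhood $\mathcal U\subseteq U$ of $2$ on which it is nonzero; shrinking if necessary, the same argument applied to $c_{D_0}(k)$ (now via $\widetilde\gerb_{D_0p^2}(k)$ being a nonvanishing analytic multiple) gives part (1). Then $\widetilde c_D(k):=\widetilde\gerb_{Dp^2}(k)/\widetilde\gerb_{D_0p^2}(k)$ is a ratio of analytic functions with nonvanishing denominator on $\mathcal U$, hence analytic on $\mathcal U$, and it agrees with the stated classical ratio at every integral even $k\geq 4$ (by the Euler-factor computation above) and at $k=2$ it equals $c_{D_0}(2)^{-1}c_D(2)$; since $c_{Dp^2}(2)=\bigl(1-\bigl(\tfrac{-D}{p}\bigr)a_p(2)^{-1}\bigr)c_D(2)$... — actually at $k=2$ both $Dp^2$ and $D$ index coefficients of the same form $g_2$, and $c_{Dp^2}(2)=c_D(2)$ because $p\nmid D$ forces the Hecke relation at $p$ to collapse on $g_2$ (which is $p$-new); this yields the final equality $\widetilde c_D(2)=c_{Dp^2}(2)/c_{D_0p^2}(2)=c_D(2)/c_{D_0}(2)$.

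I expect the main obstacle to be the bookkeeping in the first step: matching the three a priori different normalizations (Shintani lifting of $f_k$ at level $Np$, of $f_k^\sharp$ at level $N$, and Stevens' $\Lambda$-adic $\Theta(k)$) and verifying that the Euler factor $1-\bigl(\tfrac{-D}{p}\bigr)a_p(k)^{-1}p^{k/2-1}$ is exactly the one produced by the half-integral-weight $p$-stabilization operator — this requires chasing the Hecke-equivariance of Shintani through the formulas of \cite{Sh} and \cite[\S2.2]{St} together with Kohnen's explicit coefficient relations \cite{Ko2}, and being careful that both sides are only defined up to scalars which the ratio then eliminates. The analytic-continuation and non-vanishing part is comparatively soft, being just continuity of a $p$-adic analytic function together with $\Omega_2\neq 0$ and the choice of $D_0$.
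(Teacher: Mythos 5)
Your strategy --- realizing the Euler factors as the effect of $p$-stabilization on the Shintani side, so that $\widetilde c_D(k)$ becomes a ratio of Stevens' $\Lambda$-adic coefficients $\widetilde\gerb_{\bullet}(k)$, and then combining their analyticity with $\Omega_2\neq 0$ and $c_{D_0}(2)\neq 0$ to get a nonvanishing denominator on a neighborhood of $k=2$ --- is exactly the argument the paper relies on: it quotes the statement as \cite[Proposition 1.3]{DT} and records precisely the identity $\widetilde c_D(k)=\widetilde\gerb_D(k)/\widetilde\gerb_{D_0}(k)$ in Remark~\ref{rem3.2}. Your normalization bookkeeping slips in places (e.g.\ $c_{Dp^2}(k)$ and $\gerb_{Dp^2}(k)$ are coefficients of the distinct forms $g_k^\sharp$ and $g_k$, so they are not literally equal, and one could work with index $D$ rather than $Dp^2$ throughout), but since every quantity is taken as a ratio of the $D$-term against the $D_0$-term these $D$-independent factors cancel and the proof is sound.
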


\begin{rmk} \label{rem3.2}  This is \cite[Proposition 1.3]{DT}. We simply note that,  
since $p\nmid D$, this can be interpreted as a $p$-stabilization process. More precisely, for $k\geq 4$ an 
even integer 
(and $k/2$ odd, otherwise $\widetilde c_D(k)$ vanishes)  we have 
\[\widetilde c_D(k)=\frac{\gerb_D(k)}{\gerb_{D_0}(k)}=\frac{\widetilde\gerb_D(k)}{\widetilde\gerb_{D_0}(k)}.\]
This follows from the explicit description of Hecke operators given, for example, 
in  \cite[Ch. IV, Prop. 13]{Kob}.\end{rmk}

In order to relate our coefficients $A_T^{(p)}(k)$ with the coefficients $\widetilde c_D(k)$ appearing 
in Prop. \ref{prop1},  we proceed as follows. 
First recall from the above discussion that $c_{D}(k)=0$ for all $D$ such that $-D\equiv 2,3$ mod 4 
and for all positive integers $k\in\mathcal U$. If we define 
\[S_T:=\{{\text{$d\in\Z$ such that $d\mid c(T)$,  $(d,Np) = 1$, $-D_T/d^2\equiv 0,1$ mod 4}}\},\] 
we have
\begin{equation}\label{Fourier}
A_T^{(p)}(k)=\sum_{d\in S_T}d^{k/2}c_{D_T/d^2}(k).\end{equation}

Fix now a fundamental discriminant $\gerd$ with $\gerd<0$ and an integer $n\neq 0$. 
Define, for any even integer $k\geq 4$ with ${k/2}$ odd,
\[
\rho_{\gerd,n}(k):={\sum_{d\mid n;\ (d,N)=1}\mu(d)\left(\frac{\gerd}{d}\right)d^{k/2-1}a_{n/d}(k)}
,\] where $\mu(d)$ is the Mšbius function.  
Thanks to 
\cite[(11)]{Ko2}, for all even integers $k>2$ in $\mathcal U$ we have: 
\begin{equation}\label{eq-rho-bis}c_{|\gerd| n^2}(k)=c_{|\gerd|}(k)\rho_{\gerd,n}(k).\end{equation}

We now use \eqref{eq-rho-bis} to simplify the above sum \eqref{Fourier}.  
For this, we need the following preliminary discussion. For any integer $D$ we can write 
$D=\gerd\gerf^2$ 
with $\gerd$ the fundamental discriminant of the quadratic 
field $\Q(\sqrt{\gerd})$ and $\gerf>0$ a half-integer. 
In this case, we say that $\gerd$ is 
the fundamental discriminant associated with $D$. 
Also, note that $\gerf\in\Z$ if $D\equiv0,1\mod 4$. More precisely, 
if we denote by $\delta$ the maximal integer such that $\delta^2\mid D$, then $\gerd=D/\delta^2$ and 
$\gerf=\delta$ unless $D\equiv 0$ mod 4 and $D/\delta^2\equiv 2,3$ mod 4, in which case
$\gerd=D/(\delta/2)^2$ and
$\gerf=\delta/2$.  
If $D\equiv 2,3$ mod 4, then 
$\gerd=4D/\delta^2$ and $\gerf=\delta/2\not\in\Z$. 
More generally, let $d\mid\delta$, so that $d^2\mid D$. 
Write $D=\gerd_D\gerf^2_D$ and $D/d^2=\gerd_{D/d^2}(\gerf_{D/d^2})^2$, with $\gerd_D$ and $\gerd_{D/d^2}$ 
the fundamental discriminants associated with $D$ and $D/d$, respectively. First note that the maximal integer 
whose square divides $D/d^2$ is $\delta/d$. We then have 
$\gerd_D=\gerd_{D/d^2}$ and $\gerf_D=d\gerf_{D/d^2}$ unless
$D\equiv 0$ mod 4, $D/\delta^2\equiv 1$ mod 4 and $D/d^2\equiv 2,3$ mod 4, in which case $4\gerd_D=\gerd_{D/d^2}$ 
and $\gerf_D=2d\gerf_{D/d^2}$. 

As anticipated, we now use \eqref{eq-rho-bis} to simplify \eqref{Fourier}.  For any matrix $T>0$ write 
\begin{equation}\label{D_T}-D_T=\gerd_T\gerf_T^2,\end{equation} with $\gerd_T<0$. 
We have $-D_T\equiv 0,1$ mod 4,  so, in 
particular, the above discussion shows that $\gerf_T\in\Z$.  
We thus have 
\[c_{D_T}(k)=c_{|\gerd_T|\gerf_T^2}(k)=c_{|\gerd_T|}(k)\rho_{\gerd_T,\gerf_T}(k)\] 
for all integers $k\in\mathcal U$ with $k>2$. More generally, 
fix $d\in S_T$. For $T=\smallmat u{v/2}{v/2}w$, define 
$T/d:=\smallmat {u/d}{v/2d}{v/2d}{w/d}$. Then $D_{T/d}=D_T/d^2$. Under the assumption 
that $-D_T/d^2\equiv 0,1$ mod 4, the above discussion shows that $\gerf_{T/d}\in\Z$ 
and that $\gerd_{T}=\gerd_{{T/d}}$ and $\gerf_{T}=d\gerf_{{T/d}}$. We can thus write: 
\begin{equation}\label{eq4}
c_{D_T/d^2}(k)= c_{D_{T/d}}(k)=c_{|\gerd_{T/d}|\gerf_{T/d}^2}(k)=
c_{|\gerd_T|}(k)\rho_{\gerd_T,\gerf_T/d}(k)\end{equation}
for all integers $k\in\mathcal U$ with $k>2$. Inserting \eqref{eq4} into \eqref{Fourier} we get
\begin{equation}\label{++}
A_T^{(p)}(k)=c_{|\gerd_T|}(k)\sum_{d\in S_T}d^{k/2}\rho_{\gerd_T,\gerf_T/d}(k).\end{equation}
for all integers $k\in\mathcal U$ with $k>2$. 

We are now ready to define our modified Fourier coefficients. 
Fix a matrix $T_0$ such that $c(T_0)=1$ and $c_{D_{T_0}}(2)\neq 0$, 
which exists by a combination of the explicit formula of \cite{KZ} relating $c_m(2)$ 
with special values of $L$-series and non-vanishing results for $L$-series 
by \cite{BFH}, \cite{Wa} 
(see for example \cite[Lem. 2.4]{Ka} for details).
Define, for integers $k$ in $\mathcal U$ with $k>2$, \begin{equation}\label{defA}
\widetilde A_T(k):= \frac{\left(1-\left(\frac{\gerd_{T}}{p}\right)a_p(k)^{-1}p^{k/2-1}\right) {A}^{(p)}_{T}(k)}
{\left(1-\left(\frac{D_{T_0}}{p}\right)a_p(k)^{-1}p^{k/2-1}\right){A}^{(p)}_{T_0}(k)}.\end{equation}

To simplify notations, define, for all integers $k\in\mathcal U$ with $k>2$: 
\begin{equation}\label{DEF-n-T} 
n_T(k):=\sum_{d\in S_T}d^{k/2}
\rho_{\gerd_T,\gerf_T/d}(k).\end{equation}
Putting \eqref{++} into \eqref{defA} we get, for all integers $k\in\mathcal U$ with $k\geq 4$: 
\begin{equation}\label{EQ-A-TILDE}
\widetilde A_T(k)= \frac{\left(1-\left(\frac{\gerd_{T}}{p}\right)a_p(k)^{-1}p^{k/2-1}\right)c_{|\gerd_T|}(k)}
{\left(1-\left(\frac{D_{T_0}}{p}\right)a_p(k)^{-1}p^{k/2-1}\right)c_{D_{T_0}}(k)}n_T(k)=
\widetilde c_{|\gerd_T|}(k)n_T(k).\end{equation}

\begin{rmk}\label{remark3.3}Since $c_{D_{T_0}}(2)\neq 0$, clearly $p\nmid D_{T_0}$. Suppose that $p\nmid D_T$. Then 
\begin{equation}\label{EQ-REM}
\widetilde A_T(k)=\frac{\gera_T(k)}{\gera_{T_0}(k)}.\end{equation}
To show this, combining \eqref{DEF-n-T} and \eqref{EQ-A-TILDE} we find 
\[\widetilde A_T(k)=
\sum_{d\in S_T}\frac{\left(1-\left(\frac{\gerd_{T}}{p}\right)a_p(k)^{-1}p^{k/2-1}\right)c_{|\gerd_T|}(k)
\rho_{\gerd_T,\gerf_T/d}(k)}
{\left(1-\left(\frac{D_{T_0}}{p}\right)a_p(k)^{-1}p^{k/2-1}\right)c_{D_{T_0}}(k)}d^{k/2}.\]
Using \eqref{EQ-A-TILDE} we find 
\[\widetilde A_T(k)=
\sum_{d\in S_T}\frac{\left(1-\left(\frac{\gerd_{T}}{p}\right)a_p(k)^{-1}p^{k/2-1}\right)c_{D_{T}/d^2}(k)}
{\left(1-\left(\frac{D_{T_0}}{p}\right)a_p(k)^{-1}p^{k/2-1}\right)c_{D_{T_0}}(k)}d^{k/2}.\]
Since $p\nmid D_T$, then in particular $p\nmid \gerf_T$. 
So we have 
$\left(\frac{\gerd_{T}}{p}\right)=\left(\frac{\gerd_T\gerf^2_T}{p}\right)=\left(\frac{-D_T}{p}\right)$. 
Further, if $d\in S_T$ then $p\nmid d$ and we have 
$\left(\frac{-D_T}{p}\right)=\left(\frac{-D_{T}/d^2}{p}\right)$. Hence we obtain the relation 
\[\widetilde A_T(k)=
\sum_{d\in S_T}\frac{\left(1-\left(\frac{-D_T/d^2}{p}\right)a_p(k)^{-1}p^{k/2-1}\right)c_{D_{T}/d^2}(k)}
{\left(1-\left(\frac{D_{T_0}}{p}\right)a_p(k)^{-1}p^{k/2-1}\right)c_{D_{T_0}}(k)}d^{k/2}\]
and, using the definition of $\widetilde c_D(k)$, we get 
\begin{equation}\label{EQ-REM-1}\widetilde A_T(k)=
\sum_{d\in S_T}\widetilde c_{D_T/d^2}(k)d^{k/2}.\end{equation} Now, by 
Remark \ref{rem3.2}, we have 
\begin{equation}\label{EQ-REM-2}
\frac{\widetilde\gera_T(k)}{\widetilde\gera_{T_0}(k)}
\frac{\gera_T(k)}{\gera_{T_0}(k)}=\sum_{d\in S_T}\frac{\widetilde\gerb_{D_T/d^2}(k)}
{\widetilde\gerb_{D_{T_0}}(k)}d^{k/2}=
\sum_{d\in S_T}\widetilde c_{D_T/d^2}(k)d^{k/2}.\end{equation}
Combining \eqref{EQ-REM-1} and \eqref{EQ-REM-2} gives \eqref{EQ-REM}.\end{rmk}

\begin{prop} \label{prop2}
The function $k\mapsto \widetilde A_T(k)$, defined for integers $k>2$ in $\mathcal U$, 
extends to a $p$-adic analytic function on $\mathcal U$. 
Moreover, for $k=2$ we get the equality:
\begin{equation}\label{eq-A-T}\widetilde A_T(2)= n_T\cdot\widetilde c_{|\gerd_T|}(2)=n_T\cdot\frac{c_{|\gerd_T|}(2)}{c_{D_{T_0}}(2)} \end{equation}
where $n_T:=n_T(2)$. 
\end{prop}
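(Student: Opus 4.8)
The core identity to establish is \eqref{EQ-A-TILDE}, namely $\widetilde A_T(k)=\widetilde c_{|\gerd_T|}(k)\,n_T(k)$ for integers $k\geq 4$ in $\mathcal U$; everything in the proposition then follows by taking limits. The proof of \eqref{EQ-A-TILDE} has already been carried out in the text leading up to the statement: insert \eqref{++} into the numerator of \eqref{defA}, observe that the factor $\bigl(1-\bigl(\frac{\gerd_T}{p}\bigr)a_p(k)^{-1}p^{k/2-1}\bigr)c_{|\gerd_T|}(k)$ is exactly the numerator of $\widetilde c_{|\gerd_T|}(k)$ as given in Proposition \ref{prop1}, that $\bigl(1-\bigl(\frac{D_{T_0}}{p}\bigr)a_p(k)^{-1}p^{k/2-1}\bigr){A}^{(p)}_{T_0}(k)=\bigl(1-\bigl(\frac{D_{T_0}}{p}\bigr)a_p(k)^{-1}p^{k/2-1}\bigr)c_{D_{T_0}}(k)$ since $c(T_0)=1$ forces $S_{T_0}=\{1\}$ and $n_{T_0}(k)=1$, and that the remaining sum $\sum_{d\in S_T}d^{k/2}\rho_{\gerd_T,\gerf_T/d}(k)$ is by definition $n_T(k)$. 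So for integers $k\geq 4$ in $\mathcal U$ the equality $\widetilde A_T(k)=\widetilde c_{|\gerd_T|}(k)\,n_T(k)$ is immediate.

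\emph{Analytic continuation.} For the first assertion I would argue as follows. By Proposition \ref{prop1}(2), $\widetilde c_{|\gerd_T|}(k)$ extends to a $p$-adic analytic function on $\mathcal U$ (after possibly shrinking $\mathcal U$, which we are free to do, and noting $-|\gerd_T|=\gerd_T\equiv0,1\bmod 4$ with $p\nmid\gerd_T$ when this coefficient is not identically zero; if it is identically zero the statement is trivial with $\widetilde A_T\equiv 0$). On the other hand $n_T(k)=\sum_{d\in S_T}d^{k/2}\rho_{\gerd_T,\gerf_T/d}(k)$ is a \emph{finite} sum, and each $\rho_{\gerd_T,\gerf_T/d}(k)=\sum_{e\mid(\gerf_T/d),\,(e,N)=1}\mu(e)\bigl(\frac{\gerd_T}{e}\bigr)e^{k/2-1}a_{(\gerf_T/d)/e}(k)$ is a finite $\Z_p$-linear combination of the Hida-family coefficients $a_m(k)$ times the $p$-adic analytic functions $k\mapsto e^{k/2-1}$ and $k\mapsto d^{k/2}$ (which are analytic on $\mathcal U$ since $\mathcal U$ lies in a single residue class mod $p-1$, making $m^{k/2}$ well-defined and analytic for any fixed positive integer $m$ — this is the standard interpolation of $x\mapsto x^{k}$ on $\mathcal X$ restricted to $p$-adic units, applied after removing the $p$-part, which is harmless as $d,e$ are coprime to $p$). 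Hence $n_T(k)$ extends to a $p$-adic analytic function on $\mathcal U$, and therefore so does the product $\widetilde c_{|\gerd_T|}(k)\,n_T(k)$; since this product agrees with $\widetilde A_T(k)$ at all integers $k\geq 4$ in $\mathcal U$, which accumulate at $2$, it \emph{is} the desired analytic extension of $\widetilde A_T$.

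\emph{Value at $k=2$.} For the second assertion, simply evaluate the now-established analytic identity $\widetilde A_T(k)=\widetilde c_{|\gerd_T|}(k)\,n_T(k)$ at $k=2$, using $n_T:=n_T(2)=\sum_{d\in S_T}d\,\rho_{\gerd_T,\gerf_T/d}(2)$ and the formula $\widetilde c_{|\gerd_T|}(2)=c_{|\gerd_T|}(2)/c_{D_{T_0}}(2)$ from Proposition \ref{prop1}(2) (valid since $c(T_0)=1$ gives $D_{T_0}$ equal to its own fundamental discriminant, hence $\widetilde c_{|\gerd_T|}(2)=c_{|\gerd_T|}(2)/c_{|D_{T_0}|}(2)$, and $D_{T_0}<0$ so $|D_{T_0}|=-D_{T_0}=D_{T_0}$ in the relevant sign convention of \eqref{D_T}). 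This yields \eqref{eq-A-T}.

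\emph{Main obstacle.} The only genuinely delicate point is the analytic continuation: one must be sure that the \emph{formula} \eqref{++} for $A_T^{(p)}(k)$, valid a priori only for integers $k>2$, does not see the subtlety that $c_D(k)$ can vanish identically in $k$ for bad residue classes of $D$ mod $4$. This is handled by the constraint $-D_T\equiv 0,1\bmod 4$ (forcing $\gerf_T\in\Z$ and $S_T$ to consist of honest integers with $-D_T/d^2\equiv 0,1\bmod 4$) together with Remark \ref{rem3.2}, which shows $\widetilde c_{|\gerd_T|}(k)$ genuinely interpolates $\gerb_D(k)/\gerb_{D_0}(k)$. Once one knows each term of the finite sum defining $n_T(k)$ is analytic on $\mathcal U$ — which reduces to the analyticity of $k\mapsto m^{k/2}$ for $p$-adic units $m$ and of the Hida coefficients $a_m(k)$, both of which are standard — there is no further difficulty, and shrinking $\mathcal U$ if necessary (to accommodate Proposition \ref{prop1}) costs nothing.
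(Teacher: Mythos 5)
Your proposal is correct and follows essentially the same route as the paper: the identity $\widetilde A_T(k)=\widetilde c_{|\gerd_T|}(k)\,n_T(k)$ from the preceding discussion, analytic continuation of the finite sum $n_T(k)$ via the analyticity of the Hida coefficients $a_m(k)$ (and of $k\mapsto m^{k/2}$ on $\mathcal U$), the continuation of $\widetilde c_{|\gerd_T|}(k)$ from Proposition \ref{prop1}, and evaluation of the product at $k=2$. The extra details you supply (e.g.\ $S_{T_0}=\{1\}$ so that the denominator of \eqref{defA} matches that of $\widetilde c_D$, and the accumulation of integer weights at $2$ giving uniqueness of the extension) are consistent with, and slightly more explicit than, the paper's own argument.
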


\begin{proof} Clearly, $\rho_{\gerd,n}(k)$, defined for integers $k>2$ in $\mathcal U$, 
can be extended analytically to all of $\mathcal U$ because the same is true for the Fourier coefficients $a_n(k)$.  
We denote by $\widetilde \rho_{\gerd,n}(k)$ the resulting function. Likewise, we set 
$\widetilde n_T(k)$ the obvious analytic extension of $n_T(k)$ obtained from $\widetilde\rho_{\gerd_T,\gerf_T/d}(k)$ 
for all $d\in S_T$. We can thus define for all $k\in\mathcal U$ 
\begin{equation}\label{def-tildeA}
\widetilde A_T(k):=\widetilde c_{|\gerd_T|}(k)\widetilde n_T(k)\end{equation} which is, thanks to Proposition \ref{prop1}, 
the sought for extension of $\widetilde A_T(k)$. 
The value at $2$ is then a consequence of the second part of Proposition \ref{prop1}. 
\end{proof}

\begin{rmk}\label{remark-n_T}
The integer $n_T$ above can be explicitly written as 
\[n_T=\sum_{d\in S_T}{\sum_{e\mid \gerf_T/d;\ (e,N)=1}d\cdot\mu(e)\cdot\left(\frac{\gerd_T}{e}\right)\cdot a_{\gerf_T/(de)}(2)}.\]
In level $1$, the analogous sum is seen to be non-zero by writing it explicitly as a finite product of non-zero terms arising as special values of local singular series polynomials.
That is, in the terminology of \cite[Thm. 4.1]{Ka}, we have:
\[\begin{split}A_T( \Lift(\phi)^*) = \Big( 1 - \Big( \frac{\gerd_T}{p} \Big) \beta_p(\phi) p^{-r} \Big)& c_{|\gerd_T|}(\psi) \times \alpha_p(\phi)^{v_p(\gerf_T)+ 2}\times\\
&\times \prod_{\ell | \gerf_T, \ell \neq p}  \alpha_{\ell}(\phi)^{v_{\ell}(\gerf_T)} F_{\ell}(T; \beta_{\ell}(f) \ell^{-r-1}).\end{split}\]
Here $\phi$ is a $p$-ordinary normalized Hecke eigenform on $\SL_2(\Z)$ of weight $2r$, ${\rm Lift}(\phi)^*$ 
is a certain $p$-stabilization of the Saito-Kurokawa lifting of $\phi$, with Fourier coefficients $A_T({\rm Lift}(\phi)^*)$
(introduced in \cite[(15)]{Ka}), 
$\psi$ is the Shintani lifting of $\phi$ with Fourier coefficients $c_D(\psi)$,  
$\alpha_p(\phi)$ and $\beta_p(\phi)$ are the roots of 
the Hecke polynomial at $p$ of $\phi$, ordered in such a way that $\alpha_p(\phi)$ is a $p$-adic unit, $v_p$ and $v_\ell$ 
denote $p$-adic and $\ell$-adic valuations, $F(T; X)$ is a polynomial introduced in \cite[(eq 4)]{Ka},
and all other symbols have the same meaning as above.  
We do not know a proof of the analogous non-vanishing statement for higher levels.\end{rmk} 

\section{Global points on elliptic curves}\label{sectionthree}
In this section, we compute the $p$-adic derivative of the explicit formula for the Fourier coefficients of the Saito-Kurokawa family exhibited in Section \ref{sectiontwo}, and relate it to global points on elliptic curves via the fundamental results of \cite{BD1} and \cite{BD}.

\subsection{Global points associated to the Shintani lifting}\label{sec4.1}
We start by recalling the work of 
Darmon and Tornar\'ia \cite{DT}, for which we need to introduce some extra notations.  
Recall that $g=g_2$ denotes the Shintani lifting of $f=f_2$, as usually well-defined only up to scalars, with 
Fourier coefficients $c_D(2)$. Recall that 
$w_\ell\in\{\pm 1\}$ is the eigenvalue of the Atkin-Lehner involution $W_\ell$ 
acting on $f$, for a number $\ell\mid Np$. 
In accordance with \cite{DT}, we introduce the following terminology:  
\begin{enumerate}
\item[(I)] We call \emph{discriminants of type} (I) those fundamental discriminants $\gerd<0$ 
such that 
\[\left(\frac{\gerd}{\ell}\right)=w_\ell\] for all primes $\ell\mid Np$. 
\item[(II)] We call \emph{discriminants of type} (II) those $\gerd$ such that 
\[\left(\frac{\gerd}{\ell}\right)=w_\ell\] for all primes $\ell\mid N$ and 
\[\left(\frac{\gerd}{p}\right)=-w_p.\] 
\end{enumerate}
If $\gerd$ is of type II, then $c_\gerd(2)=0$ and then $\widetilde c_\gerd(2)=0$ too. 
In this latter case, it becomes interesting to look 
at the value at $2$ of the derivative of $c_\gerd(k)$ with respect to $k$. 

Let $E/\Q$ denote the elliptic curve of conductor $Np$ associated with $f$ via the Eichler-Shimura construction. 
Let 
$\gerd<0$ be a fundamental discriminant and, to simplify notations, define the imaginary quadratic field
\[K_\gerd:=\Q( \sqrt{\gerd}).\]
Denote by 
$E(K_\gerd)^-$ the submodule of the Mordell-Weil group $E(K_\gerd)$ on which the non-trivial 
involution of $\Gal(K_\gerd/\Q)$ acts as $-1$.  

Since $p$ does not divide $N$, the curve $E$ has multiplicative reduction at $p$. 
For a fundamental discriminant $\gerd<0$, 
let $L(f,\chi_{\gerd},s)$ denote the complex $L$-function of $f$ twisted by 
the quadratic character \[\chi_{\gerd}(n):=\left(\frac{\gerd}{n}\right)\] of $K_\gerd$. 
So, if the real part of $s$ is 
sufficiently large, we have
\[L(f,\chi_{\gerd},s)=\sum_{n=1}^\infty\chi_{\gerd}(n)a_nn^{-s}.\] 

We now state the main result of Darmon-Tornar\'ia (\cite[Theorem 1.5]{DT}). Recall that
$\Phi_{\rm Tate}$ denotes Tate's uniformization introduced in
\eqref{Phi-Tate} and $\log_E$ is the logarithm defined in \eqref{log}.

\begin{thm}[Darmon-Tornar\'ia]\label{thm-DT}
Let $N>1$ and suppose that $\gerd<0$ is a fundamental discriminant of type II. 
Then there exists an element $P_\gerd\in E(K_\gerd)^-\otimes_\Z\Q$  such that 
\begin{enumerate}
\item $\frac{\partial}{\partial k}\widetilde c_{|\gerd|}(k)_{|k=2}=\log_E(P_\gerd)$.
\item $P_\gerd\neq 0$ if any and only if $L'(E,\chi_{\gerd},1)\neq 0$. 
\end{enumerate}
\end{thm}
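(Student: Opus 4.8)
This is \cite[Theorem 1.5]{DT}, so one option is simply to quote \emph{loc.\ cit.}; I sketch the strategy of proof. The plan is to work through Stevens' $\Lambda$-adic Shintani lifting (Theorem~\ref{thm-stevens}), which realizes $c_D(k)$ as the $p$-adic interpolation of the Fourier coefficients of the classical Shintani lifts $\theta(f_k)$. By Shintani's integral formula, for even $k>2$ the coefficient $c_{|\gerd|}(k)$ is a finite $\C$-linear combination --- with coefficients the values of the genus character attached to $\gerd$ --- of geodesic period integrals of $f_k$ against the degree-$(k/2-1)$ polynomials associated to the $\Gamma_0(Np)$-classes of integral binary quadratic forms of discriminant $\gerd$. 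When $\gerd$ is of type II the sign of the functional equation of $L(f,\chi_\gerd,s)$ is $-1$, which forces $c_\gerd(2)=0$; simultaneously the Euler-type factor $1-\left(\frac{\gerd}{p}\right)a_p(k)^{-1}p^{k/2-1}$ appearing in the definition of $\widetilde c_{|\gerd|}(k)$ vanishes at $k=2$, because $E$ has multiplicative reduction at $p$ so that $a_p(2)=-w_p$. Thus one is in an exceptional-zero situation, here in the weight variable, and $\widetilde c_{|\gerd|}(2)=0$, as already recorded in the text.

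The first step would be to evaluate $\frac{\partial}{\partial k}\widetilde c_{|\gerd|}(k)_{|k=2}$. I would differentiate the $\Lambda$-adic expression and invoke the Greenberg--Stevens/Bertolini--Darmon mechanism: the weight-derivative of the vanishing Euler factor contributes a $\log_p$-term (through the derivative of $p^{k/2-1}$ and the Greenberg--Stevens formula for $\tfrac{d}{dk}a_p(k)|_{k=2}$ in terms of the $\mathcal L$-invariant), while the weight-variation of the Hida family $f_\infty$ inside the geodesic integrals is controlled by Stevens' two-variable ($\Lambda$-adic modular-symbol) construction. Carrying this through should turn the derivative into a $p$-adic line integral of precisely the shape occurring in the $p$-adic (rigid-analytic) recipe for Heegner-type points attached to $K_\gerd$. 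By the rigid-analytic Gross--Zagier formula and the results of \cite{BD1}, this integral equals $\log_E(P_\gerd)$ for a canonical point $P_\gerd$, and by \cite{BD} (or, in the case at hand, by the theory of complex multiplication on the relevant Shimura curve) $P_\gerd$ is the image of a genuine element of $E(K_\gerd)\otimes_\Z\Q$; it lies in the minus eigenspace $E(K_\gerd)^-\otimes_\Z\Q$ because it is the $\chi_\gerd$-twisted coefficient that is being differentiated. This gives part (1).

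For part (2) I would appeal to the Gross--Zagier(--Zhang) height formula for $P_\gerd$, combined with the work of \cite{BD}: the canonical height of $P_\gerd$ is a nonzero rational multiple of $L'(E,\chi_\gerd,1)$, whence $P_\gerd$ is non-torsion --- equivalently, nonzero in $E(K_\gerd)^-\otimes_\Z\Q$ --- exactly when $L'(E,\chi_\gerd,1)\neq 0$. On the $p$-adic side this is packaged via the interpolation property of the anticyclotomic $p$-adic $L$-function attached to $E/K_\gerd$ together with the Mazur--Tate--Teitelbaum/Greenberg--Stevens exceptional-zero formula, which lets one pass between the $p$-adic quantity that controls $P_\gerd$ and the complex value $L'(E,\chi_\gerd,1)$.

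The hard part is the first step: one must match, exactly and not merely up to an unspecified scalar, Stevens' modular-symbol-theoretic weight-derivative of the $\Lambda$-adic Shintani coefficient with the $p$-adic-integration recipe that defines $P_\gerd$ --- in particular tracking all the normalizing periods $\Omega_k/\Omega_{f_k}^-$ through both constructions so that the two sides land on the \emph{same} element of $\C_p^\times/q^\Z$. This compatibility, which may be viewed as a $p$-adic refinement of the Shimura correspondence, is the technical core of \cite{DT}.
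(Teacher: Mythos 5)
The paper offers no proof of this statement: it is imported verbatim as \cite[Theorem 1.5]{DT}, exactly as your first sentence proposes. Your accompanying sketch of the argument of \cite{DT} (exceptional zero of the Euler factor at $k=2$ since $\left(\frac{\gerd}{p}\right)=-w_p=a_p(2)$, Stevens' $\Lambda$-adic Shintani lifting, and the results of \cite{BD1} and \cite{BD} identifying the weight-derivative with $\log_E$ of a global point) is consistent with how that reference proceeds, so nothing further is needed here.
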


\begin{rmk}The point $P_\gerd$ in Theorem \ref{thm-DT} comes, as already mentioned, from the theory of Darmon points. 
These points were introduced in \cite{Dar}, under the name of Stark-Heegner points,  
as local points on elliptic curves, and are conjectured to be global points, defined over ring class fields of real quadratic fields. 
A special case of this conjecture, which is needed in Theorem \ref{thm-DT} above, is proved 
by Bertolini and Darmon  in \cite{BD}, using results from \cite{BD1}, by establishing a connection between classical Heegner points and Darmon points.  
For more details, we refer the reader to the discussion in 
\cite[Section 3]{DT}. \end{rmk}

For the next result, let $\gerd'<0$ be a fundamental discriminant of type I, prime to $\gerd$,
and define $\Delta(\gerd,\gerd'):=
\gerd\gerd'$. 
In the following lines, we simply write $\Delta$ when the role of $\gerd$ and $\gerd'$ are clear from the context. 
In particular, since $\gerd$ is of type II, $\Delta$ is not a perfect square and is prime to $Np$.
Let $\chi_\Delta$ denote the genus character associated to the pair 
of quadratic Dirichler characters $\chi_\gerd$ and $\chi_{\gerd'}$ of $\Q(\sqrt{\gerd})$ and $\Q(\sqrt{\gerd'})$. Then 
$\Delta$ is the discriminant of the real quadratic field $\Q(\sqrt{\Delta})$ and the field $\Q(\sqrt{\gerd},\sqrt{\gerd'})$ 
cut out by $\chi_\Delta$ 
is a quadratic extension of $\Q(\sqrt{\Delta})$ (unless $\chi_\Delta$ is trivial, in which case coincides with $\Q(\sqrt{\Delta})$). 
Since all primes $\ell\mid N$ are split in $K$, we may choose 
an integer $\delta$ such that $\delta^2\equiv \Delta$ mod $4N$. 
Recall from \cite[\S 2]{DT} that a primitive binary quadratic form $Q(x,y)=Ax^2+Bxy+Cy^2$
of discriminant $\Delta$ is said to be a Heegner form relative to the level $N$ if $N\mid A$ and $B\equiv \delta$ mod $N$.
Let $\mathcal F_\Delta$ denote the set of such forms. Let $a+b\sqrt \Delta$ denote a fundamental unit of norm one
in $\Z[(\Delta+\sqrt\Delta)/2]$, normalized such that $a>0$ and $b>0$, 
and, for $Q(x,y)=Ax^2+Bxy+Cy^2\in\mathcal F_\Delta$, define the matrix
\[\gamma_Q=\mat {a+bB}{2Cb}{-2Ab}{a-bB}\in\Gamma_0(N).\]

The group $\Gamma_0(N)$ acts on $\mathcal F_\Delta$ from the right by the formula 
\[(Q|\gamma)(x,y)=Q(ax+by,cx+dy)\] for $\gamma=\smallmat abcd$. 
Let $d=Q(m,n)$ be any integer represented by $Q$ (namely, such that there are integers $(m,n)$ such that 
$Q(m,n)=d$) satisfying the condition 
$\gcd(d,\gerd)=1$. The genus character $\chi_\Delta$ defines a function, denoted by the same symbol, 
\[\chi_\Delta:\mathcal F_\Delta/\Gamma_0(N)\longrightarrow \{\pm1\},\qquad \chi_\Delta(Q)=
\left\{\begin{array}{llcc}0&\text{if $\gcd(A,B,C,\gerd)>1$}\\ 
\left(\frac{\gerd}{d}\right) &\text{otherwise.}\end{array}\right.\] 

For any integer $k\in\mathcal U$ with $k\geq 4$, we consider the integrals 
\[I_\C(f_k^\sharp,P,r,s):=\int_r^sf_k^\sharp(z)P(z)dz\] where $P$ is a polynomial of degree at most $k-2$ with 
coefficients in $\C$, $r,s$ are in $\mathbb P^1(\Q)$ and the above integral is along any path in the upper 
half plane connecting $r$ and $s$. A result of Shimura shows that one can find a complex period $\Omega_{f_k^\sharp}^-$, 
analogous to $\Omega_{f_k}^-$ considered in Remark \ref{rem2.1}, such that 
\[I(f_k^\sharp,P,r,s):=I_\C(f_k^\sharp,P,r,s)/\Omega_{f_k^\sharp}^-\] belongs to the field generated over $\Q$ by the Fourier 
coefficients of $f_k^\sharp$, for all $P$, $r$ and $s$. Define
\[J(f_k^\sharp,P,r,s):=(1-a_p(k)^{-2}p^{k-2})I(f_k^\sharp,P,r,s).\] 

\begin{rmk}
The only choice among the complex periods $\Omega_{f_k^\sharp}^+$ and $\Omega_{f_k^\sharp}^-$ 
which is relevant for this paper is $\Omega_{f_k^\sharp}^-$. This corresponds to the choice made 
in Theorem \ref{thm-stevens} (see also Remark \ref{rem2.1}).  In fact, the proof of Theorem \ref{thm-stevens} 
proceeds by $p$-adically interpolating the above integrals $I(f_k^\sharp,P,r,s)$, which are eventually related 
to Fourier coefficients of half-integral modular forms by Shintani's work. However, 
the choice of periods $\Omega_{f_k^\sharp}^+$ instead of $\Omega_{f_k^\sharp}^-$ is possible 
and leads to a parallel theory of Darmon points in this situation 
and conjectural relations with half-integral modular forms: see
\cite[Secs. 4 and 5]{DT} for details.  
 \end{rmk}

We fix now embeddings $\bar\Q\hookrightarrow \C$ and $\bar\Q\hookrightarrow\C_p$.
By \cite[Proposition 3.4]{BD}, one knows that, 
for any choice of $\tau\in \Q_{p^2} \backslash \Q_p$, where $\Q_{p^2}$ is the quadratic unramified extension of $\Q_p$, the function 
$k\mapsto J(f_k^\sharp,P,r,s)$, a priori only defined for integers $k>2$ in $\mathcal U$, 
extends to a $p$-adic analytic function on $\mathcal U$ that vanishes at $k=2$. We shall denote $k\mapsto J(k,P,r,s)$ 
this function. 

Fix a square root $\sqrt\Delta$ of $\Delta$ in $\Q_{p^2}$ and, for $Q\in\mathcal F_\Delta$ as above,
define \[\tau_Q:=\frac{-B +\sqrt{\Delta}}{2A}.\] We may then define
\[J(f,Q):=\frac{d}{dk}J(k,(z-\tau_Q)^{k-2},r,\gamma_Q(r))_{|k=2}.\] Twisting $J(f,Q)$ by $\chi_\Delta$, we may 
also define: 
\begin{equation}\label{shintani}
J(f,\gerd,\gerd')=\sum_{Q\in\mathcal F_\Delta/\Gamma_0(N)}\chi_\Delta(Q)J(f,Q).\end{equation}
The second result we quote from the work of Darmon-Tornar\'ia is \cite[Theorem 5.1]{DT}, which can be restated
as follows:  
\begin{thm}[Darmon-Tornar\'ia]\label{DT2} Fix $\gerd<0$ is a fundamental discriminant of type II. 
\begin{enumerate}
\item For any fundamental discriminant $\gerd'$ of type I, we have the relation: 
\[\widetilde c_{|\gerd'|}(2)\log_E(P_\gerd)=J(f,\gerd,\gerd').\]
\item 
If $N>1$ then the function $\gerd'\mapsto J(f,\gerd,\gerd')$ is non-zero if and only if $L'(E,\chi_\gerd,1)\neq0$.\end{enumerate}\end{thm}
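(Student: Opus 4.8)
The plan is to reduce everything to the corresponding statements in \cite{DT}, using the explicit formula \eqref{EQ-A-TILDE} together with the factorization of Fourier coefficients established in Section \ref{sectiontwo}. For part (1), I would start from the defining relation \eqref{EQ-A-TILDE}, namely $\widetilde A_T(k)=\widetilde c_{|\gerd_T|}(k)n_T(k)$, and differentiate at $k=2$. Since $\widetilde c_{|\gerd'|}(2)\neq 0$ is arranged by the choice of $T_0$ together with Proposition \ref{prop1}(1), the factor $n_{T'}(2)=n_{T'}$ is the relevant quantity. The key computational input is the recollection, via \cite[(11)]{Ko2} and \eqref{eq-rho-bis}, that for $T'$ of type I with $\gerd_{T'}=\gerd'$, the coefficient $\widetilde c_{|\gerd'|}(2)$ is (up to the normalizing factor $c_{D_{T_0}}(2)$) precisely the half-integral weight Fourier coefficient appearing in Theorem \ref{DT2}(1). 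Then I would invoke the identity of \cite[Theorem 5.1(1)]{DT}, which relates $\widetilde c_{|\gerd'|}(2)\log_E(P_\gerd)$ to the Shintani geodesic integral $J(f,\gerd,\gerd')$ defined in \eqref{shintani}; matching the quadratic forms $Q\in\mathcal F_\Delta$ with the matrices $T'$, and checking that the genus character $\chi_\Delta$ evaluated on $Q$ agrees with the Legendre-symbol factors coming from the sum defining $n_{T'}$, gives the claimed equality.

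For part (2), the non-vanishing criterion, I would argue as follows. By part (1), the function $\gerd'\mapsto J(f,\gerd,\gerd')$ is, up to the fixed nonzero scalar $\log_E(P_\gerd)$ (which is nonzero precisely when $L'(E,\chi_\gerd,1)\neq 0$ by Theorem \ref{thm-DT}(2)), equal to $\gerd'\mapsto\widetilde c_{|\gerd'|}(2)$ up to further nonzero normalization. So the statement would follow once one knows that the map $\gerd'\mapsto\widetilde c_{|\gerd'|}(2)$ is not identically zero over type I discriminants. This is exactly where one uses the Kohnen--Zagier formula \cite{KZ} relating $c_m(2)^2$ to central values $L(E,\chi_\gerd,1)$, combined with the non-vanishing results of \cite{BFH} and \cite{Wa}: there exists a type I fundamental discriminant $\gerd'$ with $L(E,\chi_{\gerd'},1)\neq 0$, hence $c_{|\gerd'|}(2)\neq 0$, hence $\widetilde c_{|\gerd'|}(2)\neq 0$. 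Conversely, if $L'(E,\chi_\gerd,1)=0$ then $P_\gerd=0$ by Theorem \ref{thm-DT}(2), so $J(f,\gerd,\gerd')=0$ for every $\gerd'$ by part (1). This gives both implications.

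The main obstacle I expect is the bookkeeping in part (1): one must carefully match the indexing set $\mathcal F_\Delta/\Gamma_0(N)$ of Heegner forms with the relevant family of matrices $T'$ (and the divisor sum over $d\in S_{T'}$), verify that $\tau_Q$ and $\gamma_Q$ as defined here correspond under the Shintani correspondence to the quadratic-form data attached to $T'$, and check that the period normalizations $\Omega^-_{f_k^\sharp}$ used in defining $J(k,P,r,s)$ are consistent with the normalization $\Omega^-_{f_k}$ implicit in $\widetilde c_D(k)$ via Theorem \ref{thm-stevens}. Once the dictionary between \eqref{EQ-A-TILDE} and the geodesic-integral side of \cite[Theorem 5.1]{DT} is set up correctly, the rest is a direct quotation of their results; no new $p$-adic interpolation or Darmon-point machinery is needed beyond what is already recalled in \cite{BD1}, \cite{BD} and \cite{DT}.
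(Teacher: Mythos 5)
The first thing to say is that the paper does not prove Theorem \ref{DT2} at all: it is introduced with the sentence ``The second result we quote from the work of Darmon--Tornar\'{\i}a is \cite[Theorem 5.1]{DT}, which can be restated as follows,'' so the intended ``proof'' is a citation. Your treatment of part (1) ultimately does the same thing --- you ``invoke the identity of \cite[Theorem 5.1(1)]{DT},'' which \emph{is} the statement being proved --- and that is acceptable as a quotation, but then everything you build around it is beside the point and partly misplaced. The relation \eqref{EQ-A-TILDE}, the quantities $n_T$ and $n_{T'}$, the matrix $T_0$, and the ``matching of quadratic forms $Q\in\mathcal F_\Delta$ with matrices $T'$'' all belong to the Saito--Kurokawa side of the paper and are used only later, in Theorems \ref{thm1} and \ref{thm2}; Theorem \ref{DT2} is a statement purely about the half-integral-weight coefficients $\widetilde c_{|\gerd'|}(2)$, the Darmon point $P_\gerd$, and the geodesic integrals $J(f,\gerd,\gerd')$, with no Siegel-modular input and no differentiation of $\widetilde A_T$. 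Moreover, your claim that ``$\widetilde c_{|\gerd'|}(2)\neq 0$ is arranged by the choice of $T_0$'' is not correct: only the denominator $c_{D_0}(2)$ in Proposition \ref{prop1} is guaranteed nonzero, while $\widetilde c_{|\gerd'|}(2)$ may well vanish for a particular type I discriminant $\gerd'$; the identity in (1) is asserted for \emph{all} such $\gerd'$, both sides then being zero.

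For part (2) you depart from the paper by giving an actual derivation from part (1) together with the Kohnen--Zagier formula \cite{KZ} and the nonvanishing theorems of \cite{BFH} and \cite{Wa}, rather than quoting \cite[Theorem 5.1(2)]{DT}. That argument is sound and is essentially what underlies the cited result: granting (1), the function $\gerd'\mapsto J(f,\gerd,\gerd')$ is $\gerd'\mapsto\widetilde c_{|\gerd'|}(2)$ scaled by $\log_E(P_\gerd)$; the scalar is nonzero if and only if $P_\gerd\neq 0$ (since $\log_E$ is injective on $E(K_\gerd)\otimes_\Z\Q$, its kernel on global points being torsion), which by Theorem \ref{thm-DT}(2) is equivalent to $L'(E,\chi_\gerd,1)\neq 0$; and $\gerd'\mapsto\widetilde c_{|\gerd'|}(2)$ is not identically zero on type I discriminants by the nonvanishing results (this is also where the hypothesis $N>1$ enters in \cite{DT}). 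What this buys is a self-contained reduction of (2) to (1); what it costs is re-proving something the paper simply cites, and it still leaves part (1) resting entirely on the external reference.
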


\subsection{Global points associated to the Saito-Kurokawa lifting}

We combine the explicit $\Lambda$-adic Saito-Kurokawa lifting with Darmon-Tornar\'ia's result. 
We first introduce the $L$-function attached to $\SK(f)$. 
Denote by $L(\SK(f),s)$ the Adrianov $L$-function associated with $\SK(f)$, whose definition can be found, 
e. g., in \cite[p. 179]{MRV}, where it is denoted by  $Z^*_{F}(s)$ for $F=\SK(f)$. 
We have the following relation between $L(\SK(f),s)$ and the standard $L$-function of $f$ (see \cite[Theorem 8]{MRV})  
\[
L({\SK(f)},s)=\zeta(s)\zeta(s-1)L(f,s) 
\] from which it is apparent that the central critical point is $s=1$.
Accordingly, we may consider the 
$\chi_{\gerd_T}$-twisted $L$-functions 
\[
L(\SK(f),\chi_{\gerd_T},s)=L(\chi_{\gerd_T},s)L(\chi_{\gerd_T},s-1)L(f,\chi_{\gerd_T},s). 
\] Since $\gerd_T$ is of type II, $L(f,\chi_\gerd,1)=0$ and we have 
\[L'(\SK(f),\chi_\gerd,1)=L(\chi_\gerd,0)L(\chi_\gerd,1)L'(f,\chi_\gerd,1) \] 
In particular, since the factor $L(\chi_\gerd,0)L(\chi_\gerd,1)$ is non-zero, we see that 
\begin{equation}\label{derivatives}
L'(\SK(f),\chi_\gerd,1)\neq 0\Longleftrightarrow L'(f,\chi_\gerd,1)\neq 0. 
\end{equation}

Before stating the main result of this paper, let us recall 
the integer $n_T$ introduced in Proposition \ref{prop2} and define the point 
\[Q_T:=n_T\cdot P_{\gerd_T}.\]
Suppose from now on that $n_T \neq 0$.

\begin{thm}\label{thm1} Let $N>1$. Suppose that $-D_T = \gerd_T \cdot \gerf^2_T$, where $\gerd_T<0$ is a fundamental discriminant of type II.
Then:
\begin{enumerate} \item We have the following relation: 
\[  \frac{\partial}{\partial k}\widetilde A_T(k)_{|k=2} = 
\log_E Q_{T}.\] 
\item $Q_T $ is non-zero in $E(K_{\gerd_T})^-\otimes_\Z\Q$ if and only if $L'(\SK(f),\chi_{\gerd_T},1)\neq 0$.\end{enumerate}
\end{thm}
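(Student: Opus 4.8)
The plan is to reduce Theorem \ref{thm1} to the already-quoted results of Darmon--Tornar\'\i a (Theorem \ref{thm-DT}) via the explicit formula \eqref{EQ-A-TILDE}, namely $\widetilde A_T(k)=\widetilde c_{|\gerd_T|}(k)\,\widetilde n_T(k)$, together with the analytic extension furnished by Proposition \ref{prop2}. First I would note that since $\gerd_T$ is of type II, $\widetilde c_{|\gerd_T|}(2)=0$ by the discussion preceding Theorem \ref{thm-DT}; hence the Leibniz rule applied to \eqref{def-tildeA} gives
\[
\frac{\partial}{\partial k}\widetilde A_T(k)_{|k=2}
=\Big(\frac{\partial}{\partial k}\widetilde c_{|\gerd_T|}(k)_{|k=2}\Big)\widetilde n_T(2)
+\widetilde c_{|\gerd_T|}(2)\,\frac{\partial}{\partial k}\widetilde n_T(k)_{|k=2}
=n_T\cdot\frac{\partial}{\partial k}\widetilde c_{|\gerd_T|}(k)_{|k=2},
\]
using $\widetilde n_T(2)=n_T(2)=n_T$ from \eqref{DEF-n-T}. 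Now apply part (1) of Theorem \ref{thm-DT} to replace $\frac{\partial}{\partial k}\widetilde c_{|\gerd_T|}(k)_{|k=2}$ by $\log_E(P_{\gerd_T})$, and use $\Q$-linearity of $\log_E$ on $E(\C_p)\otimes_\Z\Q$ to get $n_T\log_E(P_{\gerd_T})=\log_E(n_T P_{\gerd_T})=\log_E(Q_T)$ by the definition $Q_T:=n_T\cdot P_{\gerd_T}$. This proves part (1).

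For part (2): since $n_T\neq 0$ by the standing hypothesis and $E(K_{\gerd_T})^-\otimes_\Z\Q$ is a $\Q$-vector space, multiplication by $n_T$ is injective on it, so $Q_T=n_T\cdot P_{\gerd_T}\neq 0$ in $E(K_{\gerd_T})^-\otimes_\Z\Q$ if and only if $P_{\gerd_T}\neq 0$. By part (2) of Theorem \ref{thm-DT} this holds if and only if $L'(E,\chi_{\gerd_T},1)\neq 0$. Since $E$ is the elliptic curve attached to $f$ via Eichler--Shimura, $L(E,s)=L(f,s)$ and likewise after twisting, so $L'(E,\chi_{\gerd_T},1)\neq0$ is equivalent to $L'(f,\chi_{\gerd_T},1)\neq0$. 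Finally invoke the factorization \eqref{derivatives}, i.e. $L'(\SK(f),\chi_{\gerd_T},1)\neq0\Longleftrightarrow L'(f,\chi_{\gerd_T},1)\neq0$, obtained from the identity $L(\SK(f),\chi_{\gerd_T},s)=L(\chi_{\gerd_T},s)L(\chi_{\gerd_T},s-1)L(f,\chi_{\gerd_T},s)$ and the non-vanishing of the Dirichlet $L$-values $L(\chi_{\gerd_T},0)L(\chi_{\gerd_T},1)$. Chaining these equivalences yields part (2).

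The genuinely substantive content has all been front-loaded into Sections \ref{sectionone}--\ref{sectiontwo} (the explicit $\Lambda$-adic Saito--Kurokawa lifting, the Ibukiyama--Kohnen relations collapsing $\widetilde A_T(k)$ to $\widetilde c_{|\gerd_T|}(k)n_T(k)$, and the analyticity in Proposition \ref{prop2}) and into the cited theorem of Darmon--Tornar\'\i a; so the proof of Theorem \ref{thm1} itself is essentially a bookkeeping argument. The one point requiring a little care, and the place where I expect the argument to be most delicate, is the vanishing $\widetilde c_{|\gerd_T|}(2)=0$ for $\gerd_T$ of type II: this is exactly what makes the product rule degenerate to a single term and is what forces us to pass to the derivative in the first place. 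I would double-check that the type II condition $\left(\frac{\gerd_T}{p}\right)=-w_p$ indeed forces $c_{|\gerd_T|}(2)=0$ via the sign condition $\left(\frac{-D}{\ell}\right)=w_\ell$ for $\ell\mid Np$ recorded just before Proposition \ref{prop1}, since the entire argument hinges on it; everything else is formal manipulation of $p$-adic analytic functions and $\Q$-linearity of $\log_E$.
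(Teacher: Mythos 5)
Your proposal is correct and follows essentially the same route as the paper: apply the Leibniz rule to $\widetilde A_T(k)=\widetilde c_{|\gerd_T|}(k)\widetilde n_T(k)$, kill the second term via $\widetilde c_{|\gerd_T|}(2)=0$ for $\gerd_T$ of type II, and conclude with Theorem \ref{thm-DT} and the factorization \eqref{derivatives}. Your write-up is if anything slightly more explicit than the paper's (spelling out the $\Q$-linearity of $\log_E$ and the role of the standing hypothesis $n_T\neq 0$ in part (2)), but the argument is the same.
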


\begin{proof}
Using \eqref{def-tildeA} to compute formally the $p$-adic derivative of the Fourier coefficients $\widetilde{A}_T(k)$, we find: 
\[
\frac{\partial}{\partial k}\widetilde A_T(k)_{|k=2} =
\frac{\partial}{\partial k}\widetilde c_{|\gerd_T|}(k)_{|k=2} \cdot\widetilde n_T(2)+\widetilde c_{|\gerd_T|}(2)\cdot
\frac{\partial}{\partial k}\widetilde n_T(k)_{|k=2} .\]
Since $\gerd_T$ is of type II, we have $\widetilde c_{|\gerd_T|}(2)=0$, and thus Theorem \ref{thm-DT}
implies the result, in light of the definition of $Q_T$. For the
second part, simply note that if $P_{\gerd_T}\neq 0$ then any non-zero multiple of it is non-zero and then use the relation \eqref{derivatives} together with Theorem \ref{thm-DT}, Part (2). 
\end{proof}

\begin{thm}\label{thm2} Fix $T$ such that 
$-D_T = \gerd_T \cdot \gerf^2_T$ with $\gerd_T<0$ a fundamental discriminant of type II.
For $T'$ such that $-D_{T'} = \gerd_{T'} \cdot \gerf^2_{T'}$ with $\gerd_{T'}$ a 
fundamental discriminant of type I, we have: 
\begin{enumerate}
\item $\widetilde A_{T'}(2)\log_E(P_{\gerd_T})= n_{T'}J(f,\gerd_T,\gerd_{T'})$.
\item If $N>1$   
then the function $\gerd_{T'}\mapsto J(f,\gerd_T,\gerd_{T'})$ is non-zero 
if and only if $L'(\SK(f),\chi_{\gerd_T},1)\neq0$.\end{enumerate}
\end{thm}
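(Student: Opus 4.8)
The plan is to mirror the proof of Theorem \ref{thm1}, replacing the use of Theorem \ref{thm-DT} by the use of Theorem \ref{DT2}. The starting point is the factorization \eqref{def-tildeA}, namely $\widetilde A_{T'}(k)=\widetilde c_{|\gerd_{T'}|}(k)\,\widetilde n_{T'}(k)$, specialized at $k=2$. Since $\gerd_{T'}$ is a fundamental discriminant of type I, the quantity $\widetilde c_{|\gerd_{T'}|}(2)=c_{|\gerd_{T'}|}(2)/c_{D_{T_0}}(2)$ need not vanish, and Proposition \ref{prop2} gives directly $\widetilde A_{T'}(2)=n_{T'}\cdot \widetilde c_{|\gerd_{T'}|}(2)$, where $n_{T'}=\widetilde n_{T'}(2)$ is the integer of Remark \ref{remark-n_T} attached to $T'$. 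So the first step is simply to record this identity.

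Next I would multiply through by $\log_E(P_{\gerd_T})$ and invoke Theorem \ref{DT2}(1), which asserts $\widetilde c_{|\gerd_{T'}|}(2)\log_E(P_{\gerd_T})=J(f,\gerd_T,\gerd_{T'})$ for the type I discriminant $\gerd_{T'}$ and the fixed type II discriminant $\gerd_T$. Combining the two displayed identities yields
\[
\widetilde A_{T'}(2)\log_E(P_{\gerd_T}) = n_{T'}\cdot\widetilde c_{|\gerd_{T'}|}(2)\log_E(P_{\gerd_T}) = n_{T'}\, J(f,\gerd_T,\gerd_{T'}),
\]
which is part (1). For part (2), the forward implication is immediate: if $L'(\SK(f),\chi_{\gerd_T},1)\neq 0$ then by \eqref{derivatives} $L'(E,\chi_{\gerd_T},1)=L'(f,\chi_{\gerd_T},1)\neq 0$, so Theorem \ref{DT2}(2) gives that $\gerd'\mapsto J(f,\gerd_T,\gerd')$ is non-zero (here one uses $N>1$). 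Conversely, if that function is non-zero, Theorem \ref{DT2}(2) gives $L'(E,\chi_{\gerd_T},1)\neq 0$, hence $L'(\SK(f),\chi_{\gerd_T},1)\neq 0$ again by \eqref{derivatives}. Note part (2) is literally a restatement of Theorem \ref{DT2}(2) once \eqref{derivatives} is in hand, so there is essentially nothing new there.

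The only genuinely delicate point is verifying that the factorization $\widetilde A_{T'}(2)=n_{T'}\widetilde c_{|\gerd_{T'}|}(2)$ and the use of Theorem \ref{DT2}(1) are compatible when $\gerf_{T'}$ is divisible by $p$: one must make sure the coefficients $c_{D_{T'}/d^2}(2)$ entering $A^{(p)}_{T'}(2)$ via \eqref{Fourier} are the correct half-integral-weight Fourier coefficients at weight $2$, and that the normalization in \eqref{defA} (the Euler factor at $p$ attached to $\gerd_{T'}$, not to $D_{T'}$) matches the one in the definition of $\widetilde c_D$ used by Darmon-Tornar\'ia. This is exactly the content of the manipulations in Remark \ref{remark3.3} and Proposition \ref{prop2}, so I would cite those rather than redo the computation. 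I expect the main obstacle, such as it is, to be purely bookkeeping: keeping straight which discriminant ($\gerd_{T'}$ versus $D_{T'}$ versus $D_{T_0}$) appears in each Legendre symbol and Euler factor, so that the reduction to Theorem \ref{DT2} is clean. Beyond that, the theorem is a formal corollary of the results already assembled. Unlike in Theorem \ref{thm1}, there is no subtlety about $\widetilde c_{|\gerd_{T'}|}(2)$ vanishing, since $\gerd_{T'}$ is of type I; and the standing hypothesis $n_{T'}\neq 0$ is not even needed for the statement as written, only for $J(f,\gerd_T,\gerd_{T'})$ to detect non-vanishing of $\log_E(P_{\gerd_T})$.
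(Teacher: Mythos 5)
Your proposal is correct and follows essentially the same route as the paper: part (1) is obtained by combining the factorization $\widetilde A_{T'}(2)=n_{T'}\,\widetilde c_{|\gerd_{T'}|}(2)$ from Proposition \ref{prop2} (equation \eqref{eq-A-T}) with Theorem \ref{DT2}(1), and part (2) is just Theorem \ref{DT2}(2) combined with \eqref{derivatives}. The paper's proof is precisely this two-line computation, so nothing further is needed.
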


\begin{proof} Combining \eqref{eq-A-T} and Theorem \ref{DT2}, we have 
\[ \widetilde A_{T'}(2)\log_E(P_{\gerd_T})=
\widetilde c_{|\gerd_{T'}|}(2)n_{T'}\log_E(P_{\gerd_T})=
n_{T'}J(f,\gerd_T,\gerd'_T).\] This shows the first part, while the second 
is just a restatement of the second part of Theorem \ref{DT2} combined with \eqref{derivatives}. 
\end{proof}

{\bf Acknowledgments:} 
\noindent
We would like to thank: H. Kawamura for kindly contributing his expertise on the Saito-Kurokawa lifting, and for immediately pointing out a mistake in a preliminary version of this paper; H. Katsurada for mentioning \cite{Ibu} and thus providing the correct tool the rectify the above mistake; T. Ibukiyama for sending us the preprint \cite{Ibu} before its public distribution; and the Hakuba conference organizers of 2007 and 2008 for giving the second author the chance to learn about explicit formulae for classical automorphic forms from the experts. This work was entirely completed while the second author (and for a short stay, the first author) enjoyed the hospitality of the Max Planck Institute fŸr Mathematik in Bonn. Finally, we would like to thank the referee for his/her careful reading of the manuscript and for valuable suggestions and comments.

\end{document}